\newtheorem{lemma}{Lemma}
\newtheorem{theorem}{Theorem}
\newtheorem{remark}{Remark}
\newtheorem{problem}{Problem}
\journal{SIAM Journal on Imaging Science}
\begin{document}

\begin{frontmatter}

\title{General reconstruction of elastic strain fields from their Longitudinal Ray Transform}

\author[Ncle]{CM Wensrich}
\author[Mchr]{S Holman}
\author[Mchr]{WRB Lionheart}
\author[Chile]{M Courdurier}
\author[Sobo]{A Polyakova}
\author[NSU]{I Svetov}
\author[Ncle]{T Doubikin}

\affiliation[Ncle]{organization={School of Engineering, University of Newcastle, Australia},
            addressline={University Drive}, 
            city={Callaghan},
            postcode={2308}, 
            state={NSW},
            country={Australia}}

\affiliation[Mchr]{organization={School of Mathematics, University of Manchester},
            addressline={Alan Turing Building, Oxford Rd}, 
            city={Manchester},
            postcode={M13 9PL}, 
            country={UK}}

\affiliation[Chile]{organization={Department of Mathematics, Pontificia Universidad Católica de Chile},
            addressline={Avda. Vicuña Mackenna 4860}, 
            city={Macul, Santiago},
            country={Chile}}

\affiliation[Sobo]{organization={Sobolev Institute of Mathematics},
            addressline={ 630090}, 
            city={Novosibirsk},
            country={Russia}}

\affiliation[NSU]{organization={Novosibirsk State University},
            addressline={ 630090}, 
            city={Novosibirsk},
            country={Russia}}

\begin{abstract}

We develop an algorithm for reconstruction of elastic strain fields from their Longitudinal Ray Transform (LRT) in either two or three dimensions. In general, the LRT only determines the solenoidal part of a symmetric tensor field, but elastic strain fields additionally satisfy mechanical equilibrium, an extra condition that allows for full reconstruction 
in many cases. Our method provides full reconstruction for general elastic strain fields in connected objects whose boundary only contains one component, while previous results included other requirements such as no residual stress, or zero boundary traction. This allows for full reconstruction in energy resolved neutron transmission imaging for simple objects. Along the way, we prove that the LRT of a potential rank-2  tensor restricted to a bounded set determines the potential on the boundary of the set up to infinitesimal rigid motions on each component of the boundary. The method is demonstrated with numerical examples in two dimensions.  

\end{abstract}



\begin{keyword}
Strain tomography \sep Longitudinal Ray Transform \sep Bragg edge \sep Neutron transmission



\end{keyword}

\end{frontmatter}


\section{Introduction and background}

Energy resolved neutron transmission imaging \cite{kiyanagi2012new,kockelmann2007energy,shinohara2020energy} poses a range of rich-tomography problems where the aim is to reconstruct non-scalar spatially distributed information from projected images (potentially non-scalar).  Bragg-edge strain tomography is a prominent example where the aim is to reconstruct the triaxial elastic strain field within a sample (a rank-2 tensor) from a set of scalar neutron transmission-based strain images \cite{abbey2012neutron,gregg2017tomographic,gregg2018tomographic,hendriks2019robust,hendriks2017bragg,santisteban2001time}.

With reference to Figure \ref{fig:BE_Imaging}, this technique revolves around measuring inter-atomic lattice spacings within a sample from abrupt changes in neutron transmission rate as a function of wavelength.  Known as `Bragg-edges', these changes are due to coherent elastic scattering and occur at well-defined wavelengths related to the crystal structure and lattice spacing.  Relative change in the location of these edges provides a direct measurement of elastic strain within a sample of interest.  A detailed overview of the technique and its application can be found in the literature (e.g. see \cite{santisteban2001time}), however the salient features from our point of view is that the measured value of strain at each pixel is the longitudinal projection of elastic strain, $\epsilon$, averaged along the path of a corresponding ray of the form
\begin{equation}
        \frac{1}{L} \int_{0}^L \epsilon_{ij}(x_0+t\xi)\xi_{i}\xi_{j}dt,
        \label{BEStrain}
\end{equation}
where $x_0$ is the point on the surface where the ray enters the sample, $L$ is the projected thickness of the sample (as observed by the ray) and $\xi$ is a unit vector indicating the direction of the ray (see Figure \ref{LRTGeom}).  Strain measurements of this type exclusively refer to the elastic component of strain; the measurement is insensitive to inelastic strains due to plasticity, mechanical interference, etc.

\begin{figure}[tb]
        \vspace{-1ex}
    	\centering
        \includegraphics[width=0.95\linewidth]{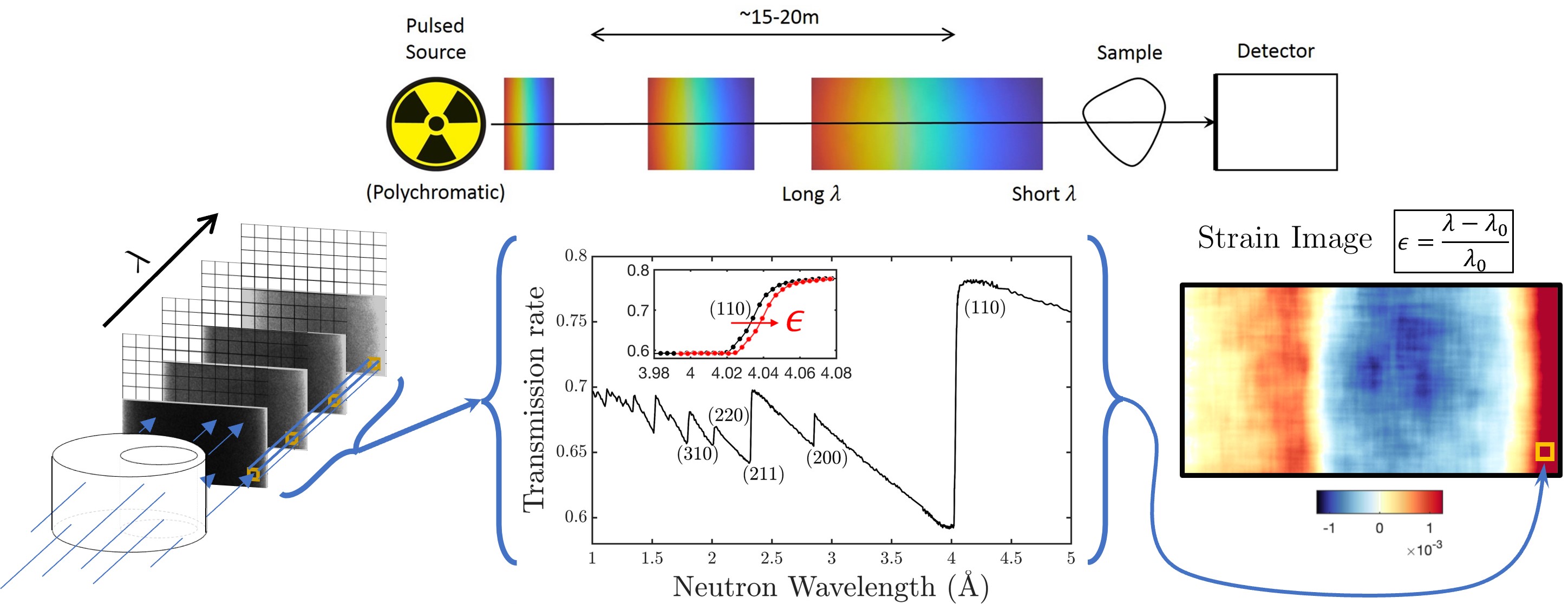}
  	\put(-300,120){(a)}
	  \put(-360,85){(b)}
        \put(-270,80){(c)}
	  \put(-105,85){(d)}
    	\caption{Bragg-edge imaging of strain at a pulsed neutron source.  (a) Pulses of neutrons are generated at a source (usually a spallation target) before travelling a fixed distance down a guide to a sample and detector. High-energy (short wavelength) neutrons travel faster and arrive before low-energy (long wavelength) neutrons; the time-of-flight of a detected neutron is proportional to its wavelength. (b) Pixelated time-of-flight detectors now allow for \emph{rich}-imaging where a `radiograph' consists of a stack of images - one for each neutron wavelength in a spectrum.  (c) At each pixel, Bragg-edges are formed at wavelengths defined by the spacings of various crystal planes within the sample.  Relative change in the location of these edges provide a measure of strain averaged along the ray paths.  e.g. (d) A projected strain image from a steel `ring-and-plug' reference sample computed from relative shift in the (110) edge position. (adapted from data measured by Gregg \textit{et.al.} \cite{gregg2018tomographic}) \vspace{-2ex}}
  	\label{fig:BE_Imaging}
\end{figure}

\begin{figure}
\begin{center}
    \includegraphics[width=0.7\linewidth]{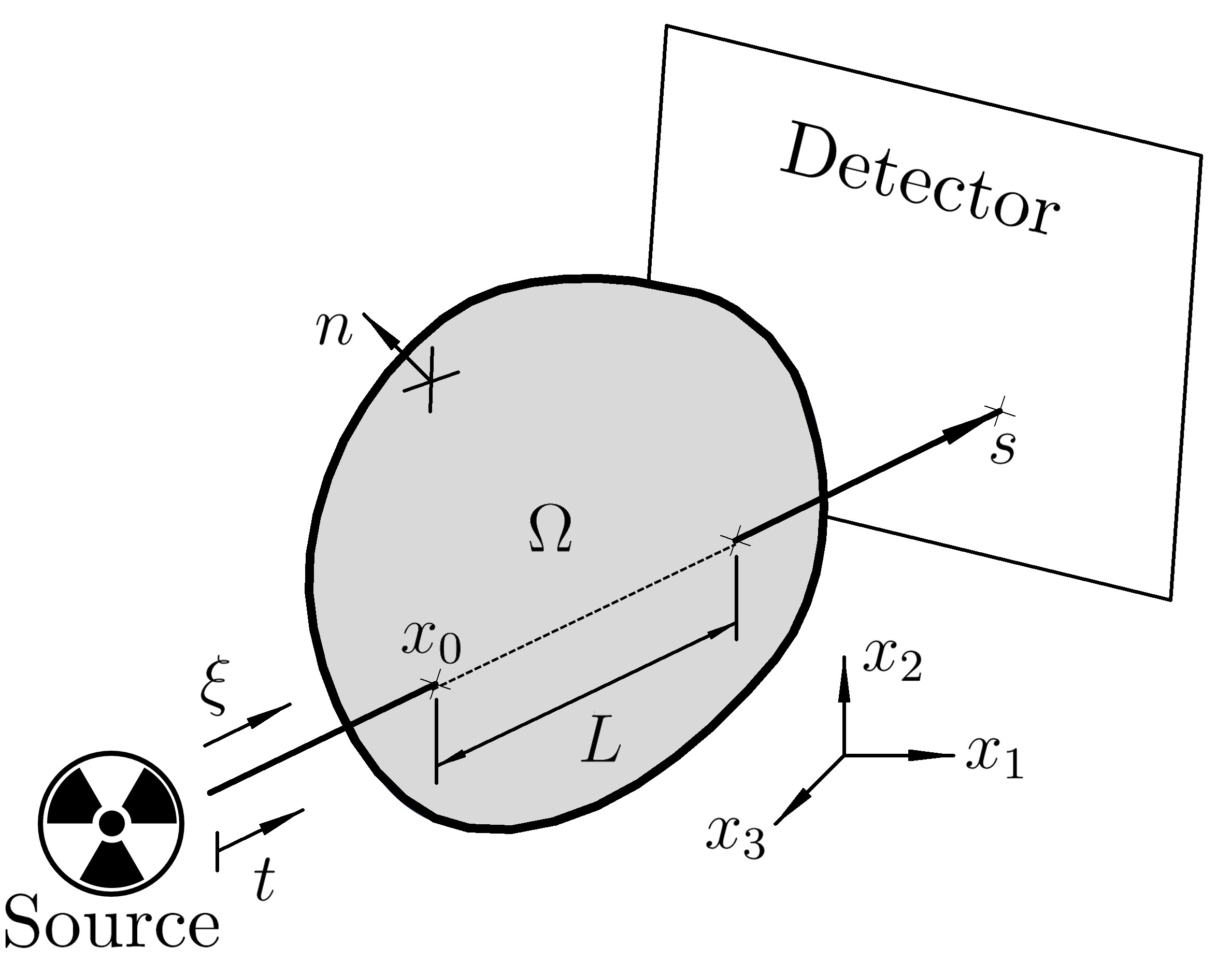}
    \caption{\label{LRTGeom}Geometry of a ray passing from a source, through the sample $\Omega$, to a detector.}
\end{center}
\end{figure} 

This physical measurement has a natural correspondence to the Longitudinal Ray Transform (LRT), $I$, which can be written for $f \in C^\infty_c(\mathcal{S}^2;\mathbb{R}^3)$ as
\begin{equation}
If(s,\xi)=\int_{-\infty}^\infty f_{ij}(s+t\xi)\xi_{i}\xi_{j} dt.
\end{equation}
Note that the extension of $I$ to all $f \in L^2(\mathcal{S}^2;\mathbb{R}^3)$ can be carried out in the usual way (see below for definitions and notation).  From this perspective, the practical problem becomes one of reconstructing the strain field within a sample from a set of measurements of its LRT.

In a recent publication \cite{wensrich2024direct}, we examine this problem in the context of residual strain fields in $\mathbb{R}^2$ for samples satisfying mechanical equilibrium and subject to zero boundary traction (i.e. no applied load).  In such case, we were able to construct and demonstrate a direct reconstruction technique based on an inversion formula together with a relationship between Airy stress fields and Helmholtz tensor decomposition for these systems.  In this present paper, we extend such approach to a general version in three-dimensions where samples may be subject to external loads in addition to internal residual stress.  

We begin by defining our notation and the spaces we will use before providing a brief review of the LRT and its inversion formulas.  We follow this with a discussion of the key points of our prior work and its adaptation to a general reconstruction algorithm for $\mathbb{R}^2$ and $\mathbb{R}^3$.  This algorithm is then demonstrated numerically on an example in $\mathbb{R}^2$.

\section{Notation and function spaces}

We largely adopt the same notation as our previous paper \cite{wensrich2024direct} and work in the following spaces, with $\Theta$ a domain, with Lipschitz boundary if bounded, in $\mathbb{R}^n$ where $n=2,3$:

\begin{itemize}
    \item[]{$L^2(\mathcal{S}^m;\Theta)$ -- The space of square-integrable rank-$m$ symmetric tensor fields on $\Theta$.  We are particularly interested in the case of $m=2$;}
	
    \item[] $H^1(\mathcal{S}^m;\Theta)$ -- The subspace of $L^2(\mathcal{S}^m;\Theta)$ of fields whose weak first derivatives are all square-integrable;
  
    \item[]{$\mathcal{C}^\infty(\mathcal{S}^m;\Theta)$ -- The space of smooth rank-$m$ symmetric tensor fields on $\Theta$ with continuous derivatives of all orders.}
\end{itemize}

We make use of the following differential operators:
\begin{itemize}
    \item[]{$d$ -- The symmetric gradient operator. For $f \in \mathcal{C}^\infty(\mathcal{S}^m;\Theta)$, $df \in \mathcal{C}^\infty(\mathcal{S}^{m+1};\Theta)$ will be the symmetric derivative defined in \cite{sharafutdinov2012integral}. This coincides with the gradient when $m = 0$ and for $u \in \mathcal{C}^\infty(\mathcal{S}^1;\Theta)$
    \[
    [du]_{ij} = \frac{1}{2}\left (\frac{\partial u_{i}}{\partial x_{j}} + \frac{\partial u_{j}}{\partial x_{i}} \right ),
    \]
    or equivalently $du=\tfrac{1}{2}\big(\nabla \otimes u + (\nabla \otimes u)^T\big)$, where $\otimes$ is the dyadic product;}
   
    \item[]
    {$\text{Div}$ -- The divergence operator which is the formal adjoint of $-d$ and maps $\mathcal{C}^\infty(\mathcal{S}^{m+1};\Theta) \rightarrow \mathcal{C}^\infty(\mathcal{S}^{m};\Theta)$. This is the contraction of the gradient of a tensor field and for the general formula see \cite{sharafutdinov2012integral}. 
    For $u \in \mathcal{C}^\infty(\mathcal{S}^1;\Theta)$, $\text{Div}(u)$ is the standard divergence of $u$.  For $f \in \mathcal{C}^\infty(\mathcal{S}^2;\Theta)$, \[
    [\text{Div}(f)]_i = \frac{\partial f_{ij}}{\partial x^j};
    \]
    }

    \item[]{$\Delta$ -- The Laplacian defined by $\Delta = \frac{\partial^2}{\partial x_k \partial x_k}$.  Fractional powers of $\Delta$, such as that found in \eqref{fullSharafutdinov},\eqref{2DInvFormula} and, \eqref{3DInvFormula}, are defined via the Fourier transform;}

    \item[]{$W$ -- The Saint Venant operator.  For $f\in\mathcal{C}^\infty(\mathcal{S}^2;\Theta)$, $Wf$ is a rank-4 tensor with components
\[
[Wf]_{ijkl}=\frac{\partial^2f_{ij}}{\partial x_k\partial x_l}  + \frac{\partial^2f_{kl}}{\partial x_i\partial x_j}  -\frac{\partial^2f_{il}}{\partial x_j\partial x_k} - \frac{\partial^2f_{jk}}{\partial x_i\partial x_l}.
\]
For $n=3$, $Wf$ has six unique components that can also be specified by the rank-2 symmetric incompatibility tensor $Rf=\nabla \times (\nabla \times f)^T$, or component-wise $[Rf]_{ij}=e_{kpi}e_{lqj}\nabla_p\nabla_q f_{kl}$ where $e_{ijk}$ is the Levi-Civita permutation symbol.  $R$ is identical to the $\textnormal{\bf CURLCURL}$ operator defined in \cite{CRMATH_2006}.
}
    
\end{itemize}

Additionally, we say that a tensor field is divergence-free if its divergence is zero. The differential operators are initially defined on smooth tensor fields, but can be extended to fields with distributional coefficients in the usual way.

We will mostly be concerned with tensors of rank either $m=1$ or $2$ and use the standard notations $f:g$ for contraction of rank-2 tensors and $f\cdot g$ for multiplication of a rank-2 tensor with a rank-1 tensor, or the dot product of rank-1 tensors.  When referring to components, we use standard summation convention in the case of repeated indices.

\section{LRT inversion formulas and Helmholtz decomposition}

As discussed at length by others (e.g. \cite{sharafutdinov2012integral}), the LRT has a large null space consisting of `potential' fields of the form $du$ for any $u$ that vanishes at infinity.  This provides a natural Helmholtz decomposition of $f$ into solenoidal (divergence-free) and potential parts of the form
\begin{equation}
\label{Helholtz}
    f = {^s}f + du,
\end{equation}
where $If=I\,^sf$.

The solenoidal part can be recovered from the LRT using an inversion formula due to Sharafutdinov \cite{sharafutdinov2012integral} which can be written in a general form for $f \in L^2(\mathcal{S}^m;\mathbb{R}^n)$ as
\begin{equation}
\label{fullSharafutdinov}
^sf=(-\Delta)^{1/2}\Big[\sum_{k=0}^{[m/2]}c_k(i-\Delta^{-1}d^2)^kj^k\Big]\mu^m If,
\end{equation}
where $c_k$ are defined scalar coefficients, the operators $i$ and $j$ respectively refer to product and contraction with the Kronecker tensor, and $\mu^m$ is the formal adjoint of $I$ when the measure on $\mathbb{S}^{n-1}$ is normalised to one.  In practical terms, $\mu^m$ is related to the adjoint of the X-ray transform (i.e. scalar back-projection)
\[
\mathcal{X}^*g(x)=\int_{\mathbb{S}^{n-1}} g(x,\xi) d\xi,
\]
where $d\xi$ is the surface measure on $\mathbb{S}^{n-1}$.  With $\mathcal{X}^*$ acting component-wise and each back-projection weighted by the diadic product of $\xi$ with itself $m$-times, $\mu^m$ can be written
\begin{equation}
\mu^m_{i_1i_2...i_m}= \frac{1}{2 \pi^{n/2}}\Gamma\left (\tfrac{n}{2} \right )\mathcal{X}^*\xi_{i_1}\xi_{i_2}...\xi_{i_m},
\end{equation}
where the constant factor is present because of the normalisation of the measure on $\mathbb{S}^{n-1}$ in \cite{sharafutdinov2012integral}. 

For rank-2 strain $\epsilon \in L^2(\mathcal{S}^2;\mathbb{R}^2)$, \eqref{fullSharafutdinov} can be written
\begin{align}
\label{2DInvFormula}
^s\epsilon&=\frac{1}{4 \pi} (-\Delta)^{1/2}\mathcal{X}^*\xi\otimes\xi I\epsilon\\
          &=\frac{1}{4 \pi} \mathcal{X}^* \Lambda \xi\otimes\xi I\epsilon,
\end{align}
where $\Lambda$ is a ramp filter as used in standard Filtered Back Projection \cite{wensrich2024direct,derevtsov2015tomography,louis2022inversion}.  For $\epsilon \in L^2(\mathcal{S}^2;\mathbb{R}^3)$
\begin{equation}
{^s}\epsilon = \frac{1}{4\pi^2}(- \Delta)^{1/2}\Big[4 - (\text{\bf{I}}-\Delta^{-1}d^2) \textnormal{tr} \Big] \mathcal{X}^* \xi\otimes\xi I\epsilon,
\label{3DInvFormula}
\end{equation}
where $\text{\bf{I}}$ is the rank-2 identity, and $\textnormal{tr}$ is the trace operator.

Our approach to the recovery of $\epsilon$ from $I\epsilon$ is to use these inversion formulas together with physical laws to render the problem well-posed.  This process is outlined in the following section.

\section{Reconstruction in the presence of boundary traction} \label{sec:bt}

Consider a bounded elastic body with Lipschitz boundary $\Omega \subset \mathbb{R}^n$ for $n=2,3$ in the absence of body forces (e.g. gravity, magnetic forces, etc.).  Mechanical equilibrium implies that stress, $\sigma$, at every point within $\Omega$ is divergence-free which, when combined with Hooke's law, holds that
\begin{equation}
    \label{Equilibrium}
    \text{Div}(C:\epsilon)=0,
\end{equation}
where $C$ is the usual 4-rank tensor of elastic constants ($\sigma_{ij}=C_{ijkl}\epsilon_{kl}$).

From this perspective we formulate the following inverse problem that we seek to solve;
\begin{problem}
    \label{ourproblem}
    Let $\Omega \subset \mathbb{R}^n$ where $n=2,3$ be a bounded domain with Lipschitz boundary. Consider $\epsilon \in L^2(\mathcal{S}^2;\Omega)$ such that ${\rm Div}(C:\epsilon)=0$ where $C$ is a positive definite rank-4 tensor of elastic constants.  We wish to recover $\epsilon$ from its LRT $I\epsilon$.
\end{problem}

We now consider $\sigma$ and $\epsilon$ to be extended by zero outside $\Omega$ and observe that \eqref{Equilibrium} holds everywhere except on $\partial\Omega$.  The key observation in our previous paper was that a zero-traction boundary condition is equivalent to \eqref{Equilibrium} being satisfied on the boundary (at least in a distributional sense).  This allowed us to consider stress as divergence-free at all points in the domain, and in this case for $n=2$, we were able to;
\begin{enumerate}
    \item{Show that $\text{supp}({^s}\epsilon)\subset\Omega$}, and,
    \item{Applying elastic properties, express $\sigma$ in terms of $^{s}\epsilon$.}
\end{enumerate}
These results provided two separate avenues to fully recover $\epsilon$ in this special case.  In particular, one of these avenues involved calculating the potential part of \eqref{Helholtz} as the solution to an elliptic differential equation following from \eqref{Equilibrium} along with with the boundary condition $u|_{\partial \Omega}=0$.

We now consider the more general case of samples in $n=2,3$ dimensions that may be subject to external loads (i.e. non-zero boundary traction) in addition to residual stress (i.e. stress due to inelastic mechanisms such as plastic deformation and phase changes).  In this setting $\text{supp}(\epsilon)$ is still restricted to $\Omega$ but $\text{supp}({^s}\epsilon)$ and $\text{supp}(du)$ may both be unbounded. 

However, introducing the characteristic function
\[
\chi(x)=\begin{cases}
    1 & \text{for $x\in\Omega$}\\
    0 & \text{otherwise}
  \end{cases}
\]
we can write
\begin{align*}
    I\epsilon &= I\chi\epsilon\\
            &= I\chi\,{^s}\epsilon+I\chi du.
\end{align*}

Hence it is possible to write the LRT of the potential part restricted to $\Omega$ as
\begin{equation}
\label{restrictedLRT}
    I\chi du = I\epsilon - I\chi \, {^s}\epsilon
\end{equation}
Consider the ray represented by the line $t \mapsto s + t \xi$ which passes though $\Omega$ intersecting its boundary transversely at a sequence of points $\{t_j\}_{j=1}^{2N}$ (see Figure \ref{RayGeom}). 
\begin{figure}
\begin{center}
    \includegraphics[width=0.4\linewidth]{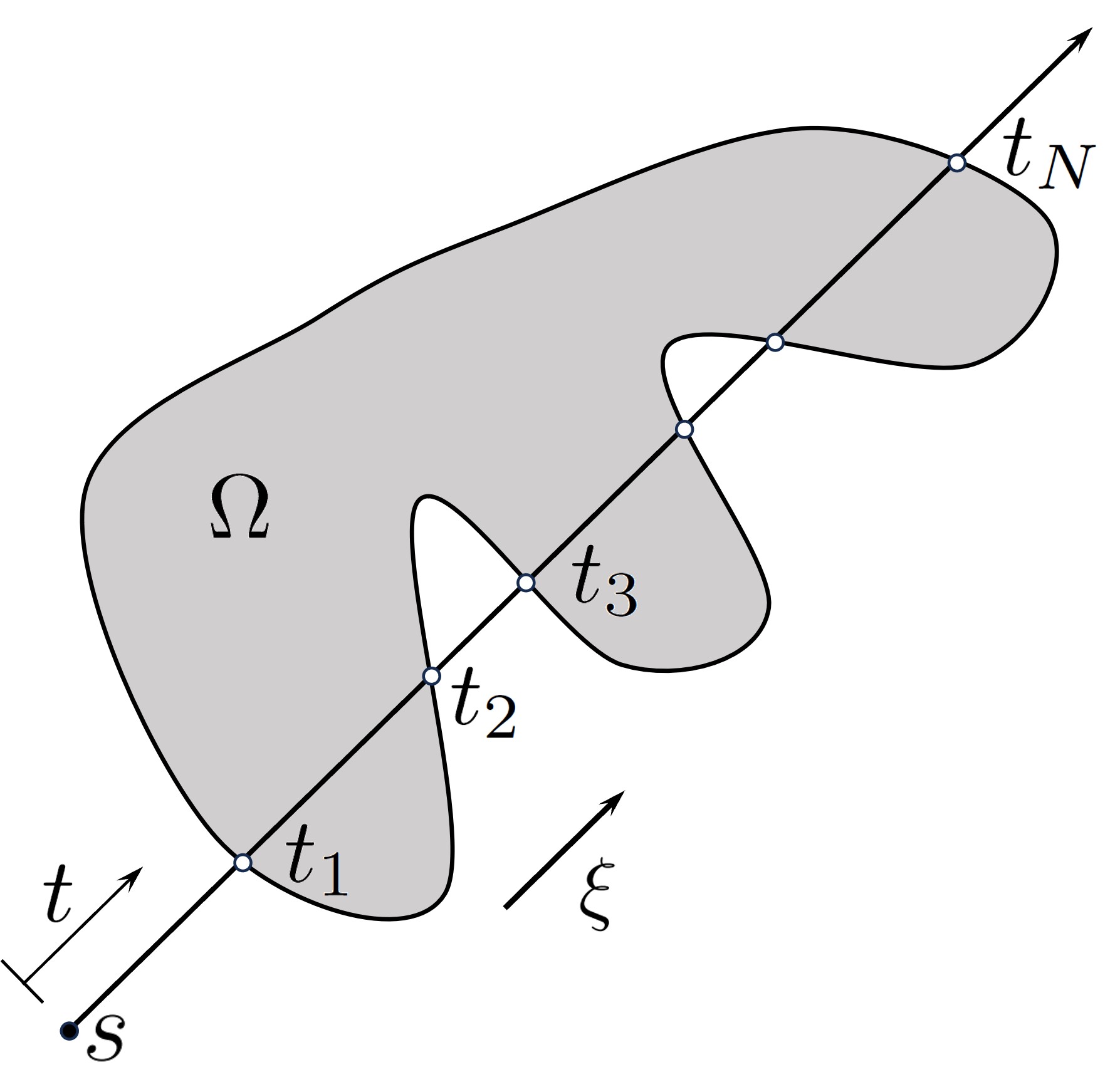}
    \caption{\label{RayGeom}Geometry of a ray passing through a sample $\Omega$ and intersecting the boundary at multiple points.}
\end{center}
\end{figure} 
For this ray we can write \eqref{restrictedLRT} as
\[
\sum_{j =1}^N \int_{t_{2j-1}}^{t_{2j}} [du]_{ij}(s+t\xi) \xi_i \xi_j \ \mathrm{d} t = I\epsilon(s,\xi) - I\chi \, ^s\epsilon(s,\xi),
\]
or,
\begin{equation}
\label{residual}
\sum_{j=1}^N \Bigg [ u_i(s + t \xi) \xi_i \Bigg ]_{t = t_{2j-1}}^{t = t_{2j}}= I\epsilon(s,\xi) - I\chi \, ^s\epsilon(s,\xi).
\end{equation}
The right hand side of this expression is the residual we obtain by comparing the LRT of $\epsilon$ to the LRT of ${^s}\epsilon$ masked to zero at all points outside $\Omega$ (easily computed from the result of \eqref{2DInvFormula} or \eqref{3DInvFormula}).  The left hand side represents corresponding differences in the longitudinal component of $u$ at the entry and exit points of the ray, and, numerically, it has been shown that this provides enough information to recover $u|_{\partial \Omega}$ up to infinitesimal rigid body motions \cite{wensrich2016bragg}. In this context, $u$ is interpreted as a displacement field and, in $\mathbb{R}^2$, an infinitesimal rigid body motion (isometry) is any vector field of the form $(c_1- x_2 \beta, c_2 + x_1 \beta)$ for scalar constants $\beta,c_1,c_2\ll1$.  Similarly, in $\mathbb{R}^3$ a rigid body motion is any vector field of the form $c+\beta \times x$ for suitably small $c,\beta \in \mathbb{R}^3$.

On this basis, we form the following theorem;

\begin{theorem}
\label{BdryThm}
	Let $u \in H^1(\mathcal{S}^1; \mathbb{R}^n),$ $ n=2,3$, and $\chi$ be the characteristic function for $\Omega$. The LRT of $\chi du$ determines $u|_{\partial\Omega}$ up to an infinitesimal rigid body motion on each component of $\partial \Omega$.
\end{theorem}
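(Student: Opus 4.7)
The plan is to reduce the theorem to a uniqueness statement: suppose $I\chi du = 0$; we aim to show $u|_{\partial\Omega}$ is an infinitesimal rigid body motion on each component of $\partial\Omega$. Rather than exploiting the pointwise chord identity \eqref{residual} directly, I would use the injectivity of $I$ on solenoidal fields (a consequence of the inversion formulas \eqref{2DInvFormula} and \eqref{3DInvFormula}) together with the classical Korn-type characterization of the kernel of the symmetric gradient $d$.

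First, since $u \in H^1(\mathcal{S}^1;\mathbb{R}^n)$, the tensor field $\chi du$ lies in $L^2(\mathcal{S}^2;\mathbb{R}^n)$ and has compact support in $\bar\Omega$. Applying the Helmholtz decomposition \eqref{Helholtz} to $\chi du$, the LRT annihilates the potential part and agrees with $I$ on the solenoidal part, so the hypothesis $I\chi du = 0$ forces the solenoidal part of $\chi du$ to vanish. Therefore $\chi du = dw$ for some vector field $w \in H^1_{\mathrm{loc}}(\mathbb{R}^n;\mathbb{R}^n)$, uniquely determined up to an infinitesimal rigid body motion.

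Next I would analyze $w$ region by region. Outside $\bar\Omega$ we have $dw = \chi du = 0$, so $w$ is an infinitesimal rigid body motion on each connected component of $\mathbb{R}^n\setminus\bar\Omega$. Using the freedom in $w$, I would normalize so that $w \equiv 0$ on the unique unbounded component; on each bounded component (hole enclosed by $\Omega$), $w$ then equals some fixed rigid motion $r_i$. Inside $\Omega$, $dw = du$ combined with the classical rigidity theorem gives $w = u + r_0$ for some infinitesimal rigid motion $r_0$ of $\Omega$.

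Finally, because $w \in H^1_{\mathrm{loc}}(\mathbb{R}^n;\mathbb{R}^n)$, its interior and exterior boundary traces must agree on every component of $\partial\Omega$. On the outer component $\Sigma_0$ this gives $u|_{\Sigma_0} + r_0 = 0$, hence $u|_{\Sigma_0} = -r_0$; on each inner component $\Sigma_i$ adjacent to a hole it gives $u|_{\Sigma_i} = r_i - r_0$. Both right-hand sides are infinitesimal rigid body motions, completing the argument. The main technical obstacle is the first step: producing a potential $w$ which is $H^1_{\mathrm{loc}}$ across $\partial\Omega$ (so the two-sided traces genuinely agree) and well-enough behaved at infinity that the normalization in the second step is legitimate. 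I would handle this by constructing $w$ explicitly as the convolution of $\chi du$ with the fundamental solution of the Lam\'e-type operator $\text{Div}\, d$ on $\mathbb{R}^n$ (or equivalently via Sharafutdinov's potential formula), then subtracting off the resulting rigid motion at infinity.
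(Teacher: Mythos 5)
Your proposal is correct, and it shares the paper's endgame exactly — rigidity of the kernel of $d$ applied separately inside $\Omega$ and on each component of the complement, followed by matching of two-sided $H^{1/2}$ traces across $\partial\Omega$, with one independent rigid motion per component of the complement when there are holes — but it reaches the crucial intermediate fact, that the null field $f=\chi d(u-v)$ is itself a symmetric gradient $dw$ with $w\in H^1$ near $\partial\Omega$, by a genuinely different mechanism. The paper cites Sharafutdinov \cite[Theorem 2.12.3]{sharafutdinov2012integral} to conclude $If=0$ forces the Saint Venant tensor $Wf$ to vanish on all of $\mathbb{R}^n$, then restricts $f$ to a large ball $B\supset\overline\Omega$ and applies the $L^2$ Saint-Venant--type theorem of \cite[Theorem 3.3]{CRMATH_2006} (Theorem \ref{Amrouche} in the paper): since $B$ is simply connected and $Wf=0$ there, $f=dw$ with $w\in H^1(\mathcal{S}^1;B)$. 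Localizing to a ball makes all questions of behaviour at infinity moot, and the $H^1$ regularity — precisely what the trace argument needs — comes packaged in the cited theorem. Your route instead kills the solenoidal part via the inversion formula and builds the potential globally by convolution with the fundamental solution of $\text{Div}\,d$, which avoids the Saint-Venant operator and \cite{CRMATH_2006} entirely; that is a real economy, but it shifts the burden onto decay estimates that you only sketch. Two points there need care: first, in $n=2$ the Lam\'e-type fundamental solution grows logarithmically, so you must verify the log term cancels — it does, because $\text{Div}(\chi du)$ annihilates constant vector fields, being the divergence of a compactly supported field — before $w$ decays and your normalization on the unbounded component is legitimate; second, to conclude that the solenoidal part of $\chi du$ vanishes you need either $I(dw)=0$ (i.e.\ $w\to 0$ along every line in both directions) or the validity of the inversion formula on the non-compactly-supported field $\chi du - dw$, both of which rest on the $O(|x|^{1-n})$ decay of $w$ and $O(|x|^{-n})$ decay of $dw$ that your construction does supply but must be stated. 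Once those are secured, your region-by-region rigidity and trace matching coincide with the paper's proof, so the net comparison is: the paper buys regularity and dispenses with infinity by quoting a published Donati-type theorem on a ball, while your argument is more self-contained analytically at the price of genuine (and in 2D slightly delicate) global estimates.
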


\begin{remark}This theorem will allow the recovery of $du$ within $\Omega$ through the following principle. Given $\epsilon={^s}\epsilon + du$, \eqref{Equilibrium} holds that
\begin{equation}
\label{ElasticModel}
    {\rm Div}(C : du)  = - {\rm Div}(C : {^s}\epsilon)
\end{equation}
within $\Omega$. Combining this with a boundary condition for $u$ obtained as above gives a complete boundary value problem that uniquely defines $u$ up to rigid body motions which are in the null space of $d$.

Note that this can only be achieved in the case where each component of $\Omega$ has a connected boundary; elastic strain following from relative rigid body motion of disconnected components of $\partial \Omega$ from the same component of $\Omega$ are in the null space of $I$.

\end{remark}

To prove Theorem \ref{BdryThm} we use \cite[Theorem 3.3]{CRMATH_2006}, which states the following.

\begin{theorem}\label{Amrouche}
    Let $B\subset\mathbb{R}^n,$ $ n=2,3$, be a simply connected domain, and let $f\in L^2(\mathcal{S}^2;B)$ be a rank-2 symmetric tensor field on $B$ that satisfies that the Saint Venant tensor $Wf$ vanishes in $B$ in the sense of distributions. Then there exists a vector field $u\in H^1(\mathcal{S}^1;B)$ such that $f=du$.
\end{theorem}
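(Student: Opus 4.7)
The plan is to prove the theorem by chaining two applications of the distributional Poincar\'e lemma with two applications of Ne\v{c}as' regularity lemma (which asserts that a distribution $T$ on $B$ whose gradient lies componentwise in $H^{-1}(B)$ must itself lie in $L^{2}(B)$). This mirrors the classical two-step recovery of a displacement from a compatible strain in smooth elasticity---path-integral construction of the infinitesimal rotation tensor $\omega$, followed by integration of $f+\omega$---but carried out entirely at the level of distributions so that the $L^{2}$/$H^{1}$ regularity of $f$ and $u$ is preserved throughout.

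For the first step, fix indices $a,b$ and set $\alpha^{ab}_{c}:=\partial_{a}f_{cb}-\partial_{b}f_{ca}\in H^{-1}(B)$. A direct calculation using the symmetry of $f$ gives, in the sense of distributions,
\[
\partial_{d}\alpha^{ab}_{c}-\partial_{c}\alpha^{ab}_{d}=[Wf]_{bcad}=0,
\]
so for each $(a,b)$ the $H^{-1}$-valued covector $(\alpha^{ab}_{1},\dots,\alpha^{ab}_{n})$ is closed. By the distributional Poincar\'e lemma on the simply connected domain $B$, there is a distribution $\omega_{ab}$ with $\partial_{c}\omega_{ab}=\alpha^{ab}_{c}$; antisymmetry $\omega_{ab}=-\omega_{ba}$ is arranged by subtracting a constant, since $\alpha^{ab}_{c}$ is already antisymmetric in $(a,b)$. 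Because $\nabla\omega_{ab}\in H^{-1}(B)$, Ne\v{c}as' lemma upgrades each $\omega_{ab}$ to $L^{2}(B)$.

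For the second step, define $V_{ab}:=f_{ab}+\omega_{ab}\in L^{2}(B)$. Substituting the formulas for $\partial_{c}\omega_{ab}$ and $\partial_{a}\omega_{cb}$ from the first step, together with symmetry of $f$, causes every term in $\partial_{c}V_{ab}-\partial_{a}V_{cb}$ to cancel, so for each fixed $b$ the covector $(V_{1b},\dots,V_{nb})$ is distributionally closed. A second application of the Poincar\'e lemma produces a distribution $u_{b}$ with $\partial_{a}u_{b}=V_{ab}$, and a second application of Ne\v{c}as' lemma gives $u_{b}\in L^{2}(B)$, hence $u\in H^{1}(B;\mathbb{R}^{n})$. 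Symmetrising $\partial_{a}u_{b}=f_{ab}+\omega_{ab}$ in $(a,b)$ removes the antisymmetric $\omega$ and leaves $du=f$.

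The main obstacle is establishing that the distributional Poincar\'e lemma and Ne\v{c}as' regularity lemma hold on a general simply connected open set $B\subset\mathbb{R}^{n}$; both depend crucially on the topology of $B$---simple connectedness enters the proof \emph{only} via those two steps---and some care is needed if $B$ lacks Lipschitz regularity, in which case one works on an exhaustion of $B$ by smooth simply connected subsets and passes to the limit using uniform bounds from Ne\v{c}as. All the algebraic manipulations reduce to the single identity $Wf=0$, so once the functional-analytic machinery is in place the construction is essentially mechanical.
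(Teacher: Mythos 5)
Your proposal is correct and follows essentially the same route as the paper's proof, which is simply a citation of \cite{CRMATH_2006}: the theorem there is established by exactly your two-step scheme (a weak Poincar\'e lemma for closed $H^{-1}$ covector fields, itself resting on the Ne\v{c}as/Lions lemma, applied first to $\alpha^{ab}_{c}=\partial_{a}f_{cb}-\partial_{b}f_{ca}$ to build the antisymmetric $\omega$, then to $f+\omega$ to build $u$), and your key identity $\partial_{d}\alpha^{ab}_{c}-\partial_{c}\alpha^{ab}_{d}=[Wf]_{bcad}$ checks out. The one caveat is that the Ne\v{c}as/Lions lemma requires $B$ bounded with Lipschitz boundary (which is what ``domain'' means in \cite{CRMATH_2006}, and suffices for the paper's application where $B$ is a ball), so your exhaustion remark for rough $B$ is both unnecessary and shakier than you suggest since the relevant constants are domain-dependent; on the plus side, your index computation treats $n=2,3$ uniformly and thus sidesteps the separate two-dimensional adaptation lemma the paper supplies.
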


\begin{remark}
    The results in \cite{CRMATH_2006} are presented in $\mathbb{R}^3$ and from the perspective of the $\textnormal{\bf CURLCURL}$ operator. Nonetheless, there is a direct identification of $\textnormal{\bf CURLCURL}(f)$ and $Wf$ (\cite[Theorem 2.1 (b)]{CRMATH_2006}), and the proof of \cite[Theorem 3.3]{CRMATH_2006} can be quickly adapted to the 2 dimensional case. To prove the 2 dimensional version of \cite[Theorem 3.3]{CRMATH_2006}, only \cite[Theorem 3.2-step (iii)]{CRMATH_2006} needs to be slightly modified, using the following differential identities in $\mathbb{R}^2$ instead of \cite[Theorem 2.1 (a)]{CRMATH_2006}.
\end{remark}

\begin{lemma}
    Let $\Phi(f)=[W f]_{1122}$ be the only independent component in $\mathbb{R}^2$ of the Saint Venant tensor $Wf$, for $f$ a rank-2 symmetric tensor field. The following identities hold in the sense of distributions for $u$ a vector field and $f$ a rank-2 symmetric tensor field in $\mathbb{R}^2$:
    \begin{enumerate}
        \item $\Phi(du)=0$.
        \item $\Delta(\textnormal{tr}(f)) = \Phi(f)+\textnormal{Div}(\textnormal{Div}(f))$.
        \item $\Delta f_{ij}+\frac{\partial^2}{\partial x_i \partial x_j} \textnormal{tr}(f) = \delta_{ij}\Phi(f) +\frac{\partial}{\partial x_i}[\textnormal{Div} (f)]_j + \frac{\partial}{\partial x_j}[\textnormal{Div} (f)]_i$, where $i,j=1,2$ and $\delta_{ij}$ is the Kronecker tensor.
    \end{enumerate}
\end{lemma}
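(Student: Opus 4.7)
The plan is to verify all three identities by direct computation in components. In $\mathbb{R}^2$ each index runs over $\{1,2\}$ and the symmetric tensor $f$ has only three independent components $f_{11}, f_{22}, f_{12}$, so once we unfold $\Phi$ everything reduces to elementary cancellations of partial derivatives. The key preliminary step is to specialise the defining formula for $W$ given in the notation section to $(i,j,k,l)=(1,1,2,2)$, which yields
\[
\Phi(f) = \partial_2^2 f_{11} + \partial_1^2 f_{22} - 2\,\partial_1 \partial_2 f_{12}.
\]
This is the single expression against which all three identities will be tested.

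For (1), I would substitute $f_{ij}=[du]_{ij}=\tfrac{1}{2}(\partial_i u_j + \partial_j u_i)$ into the formula above; the resulting six third-order terms pair off ($\partial_1\partial_2^2 u_1$ cancels with itself, and similarly $\partial_1^2\partial_2 u_2$), giving $\Phi(du)=0$. For (2), I would expand $\Delta(\textnormal{tr}(f))=(\partial_1^2+\partial_2^2)(f_{11}+f_{22})$ and $\textnormal{Div}(\textnormal{Div}(f))=\partial_1^2 f_{11}+2\partial_1\partial_2 f_{12}+\partial_2^2 f_{22}$, then add $\Phi(f)$ to the latter; the $\pm 2\partial_1\partial_2 f_{12}$ terms cancel and what remains is exactly $\Delta(\textnormal{tr}(f))$.

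For (3) I would split into the diagonal and off-diagonal cases. When $i=j$ (say $i=j=1$), the right-hand side contributes $\Phi(f) + 2\partial_1[\textnormal{Div}(f)]_1$, whose $\pm 2\partial_1\partial_2 f_{12}$ cross terms cancel again, leaving $2\partial_1^2 f_{11}+\partial_2^2 f_{11}+\partial_1^2 f_{22}$, which matches the left-hand side $\Delta f_{11}+\partial_1^2\textnormal{tr}(f)$. The $(2,2)$ case is symmetric. When $i\neq j$ the term $\delta_{ij}\Phi(f)$ drops out, and the identity reduces to verifying $\Delta f_{12}+\partial_1\partial_2 \textnormal{tr}(f) = \partial_1[\textnormal{Div}(f)]_2+\partial_2[\textnormal{Div}(f)]_1$, which is a direct four-term expansion.

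There is no real obstacle here; the statement is essentially a bookkeeping exercise once $\Phi$ is written out explicitly. I would perform the calculations assuming $f$ and $u$ are smooth, and note that validity in the distributional sense is automatic because every identity is a \emph{linear} relation between fixed partial differential operators applied to $f$ (or $u$), and such operators act continuously on distributions by duality.
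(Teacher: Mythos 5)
Your computation is correct: specialising the defining formula for $W$ to $(i,j,k,l)=(1,1,2,2)$ does give $\Phi(f)=\partial_2^2 f_{11}+\partial_1^2 f_{22}-2\,\partial_1\partial_2 f_{12}$, all three identities check out exactly as you expand them, and your closing remark that constant-coefficient linear differential identities extend to distributions by duality is the right justification for the distributional statement. The paper states this lemma without proof, treating it as an elementary verification, so your direct component-wise expansion is precisely the intended argument; the only blemish is the harmless miscount of ``six'' third-order terms in item (1) --- there are four once the factor of $2$ in $-2\,\partial_1\partial_2[du]_{12}$ is absorbed, and they cancel in pairs just as you say.
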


The proof of Theorem \ref{BdryThm} is as follow.

\begin{proof}[Proof of Theorem \ref{BdryThm}]
Let $n=2,3$. First assume that $\Omega\subset\mathbb{R}^n$ is a simply connected bounded domain with Lipschitz boundary (such that $\partial\Omega$ is the boundary of $\Omega$ and that of $\mathbb{R}^n\setminus \overline{\Omega}$).

Let $u,v \in H^1(\mathcal{S}^1;\mathbb{R}^n)$ be such that the LRT of $\chi du$ equals the LRT of $\chi dv$. Then $f =\chi d(u-v) \in L^2(\mathcal{S}^2;\mathbb{R}^n)$ and the LRT of $f$ vanishes everywhere, which by Sharafutdinov \cite[Theorem 2.12.3]{sharafutdinov2012integral} implies that $W f =0$ in all of $\mathbb{R}^n$.

Let $B$ be a ball of radius $R$ centred at the origin be large enough such that $\overline\Omega \subset B$. Since $\Omega$ is simply connected then $\Omega^+:=B\setminus\overline\Omega$ is connected and its boundary contains $\partial\Omega$.

By restricting $f$ to $B$ and using Theorem \ref{Amrouche}, there exist $w\in H^1(\mathcal{S}^1;B)$ such that $dw = f$ in $B$.

Inside of $\Omega$, $dw=f$ means $dw=\chi d(u-v)= d(u-v)$, in other words $d(w-(u-v))=0$ in $\Omega$. Since $\Omega$ is a connected domain, this implies that $w-(u-v)=g$ in $\Omega$, where $g$ is a rigid body motion \cite{ciarlet2018vector}.

On the other hand, outside of $\Omega^+$, $dw=f$ means $dw=\chi d(u-v)= 0$ in $\Omega^+$. Since $\Omega^+$ is a connected domain, this implies that $w=g_+$ in
 $\Omega^+$, where $g_+$ is a rigid body motion.
 
Since $w,u,v\in H^1(\mathcal{S}^1;B)$ and $\partial\Omega$ is boundary of $\Omega \subset B$ and part of the boundary of $\Omega^+\subset B$, we get
\begin{equation*}
w-(u-v)=g \textnormal{ and } w=g_+ \textnormal{ at } \partial\Omega,
\end{equation*} 
in the $H^{1/2}(\mathcal{S}^1;\partial\Omega)$ sense. From here, $u-v=g_+-g$ at $\partial \Omega$, i.e. $u|_{\partial\Omega}$ and $v|_{\partial\Omega}$ are equal up to a rigid body motion.

When $\Omega$ is not simply connected, the only difference in the argument is that $\Omega^+$ is not connected and it has one corresponding component for each component of $\partial\Omega$. Therefore, instead of a single $g_+$, there will be one independent rigid body motion $g_i$ for each component of $\partial\Omega$.
\end{proof}

\section{Reconstruction of boundary displacement}

\subsection{Numerical approach}
\label{NumericalScheme}
 
A 2D numerical scheme was constructed to compute the boundary displacement from the LRT of a potential from the perspective of \eqref{residual}.  This was approached largely inline with prior work by Wensrich \textit{et al.} \cite{wensrich2016bragg} and Hendriks \textit{et al.} \cite{hendriks2017bragg}.  

The process is based on a discretised version of a given boundary as a set of nodal points $\{x^i\}_{i=1}^{l} \in \partial\Omega$ with associated unknown displacement vectors $\{u^i\}_{i=1}^{l}$.  From this, the boundary displacement at an arbitrary point can be expressed as a linear interpolation between neighbouring nodes.  With reference to Figure \ref{Fig:NumericalScheme} and the left hand side of \eqref{residual}, this allows the following approximation on the given ray
\begin{equation}
    \label{rayapprox}
    (u^q-u^p)\approx\alpha^qu^j+(1-\alpha^q)u^{j+1}-\alpha^pu^i-(1-\alpha^p)u^{i+1},
\end{equation}
where
\[
\alpha^p=\frac{||x^p-x^{i+1}||}{||x^i-x^{i+1}||} \text{ and } \alpha^q=\frac{||x^q-x^{j+1}||}{||x^j-x^{j+1}||},
\]
with a natural extension in the case of multiple entry and exit points.

\begin{figure}
\begin{center}
    \includegraphics[width=0.7\linewidth]{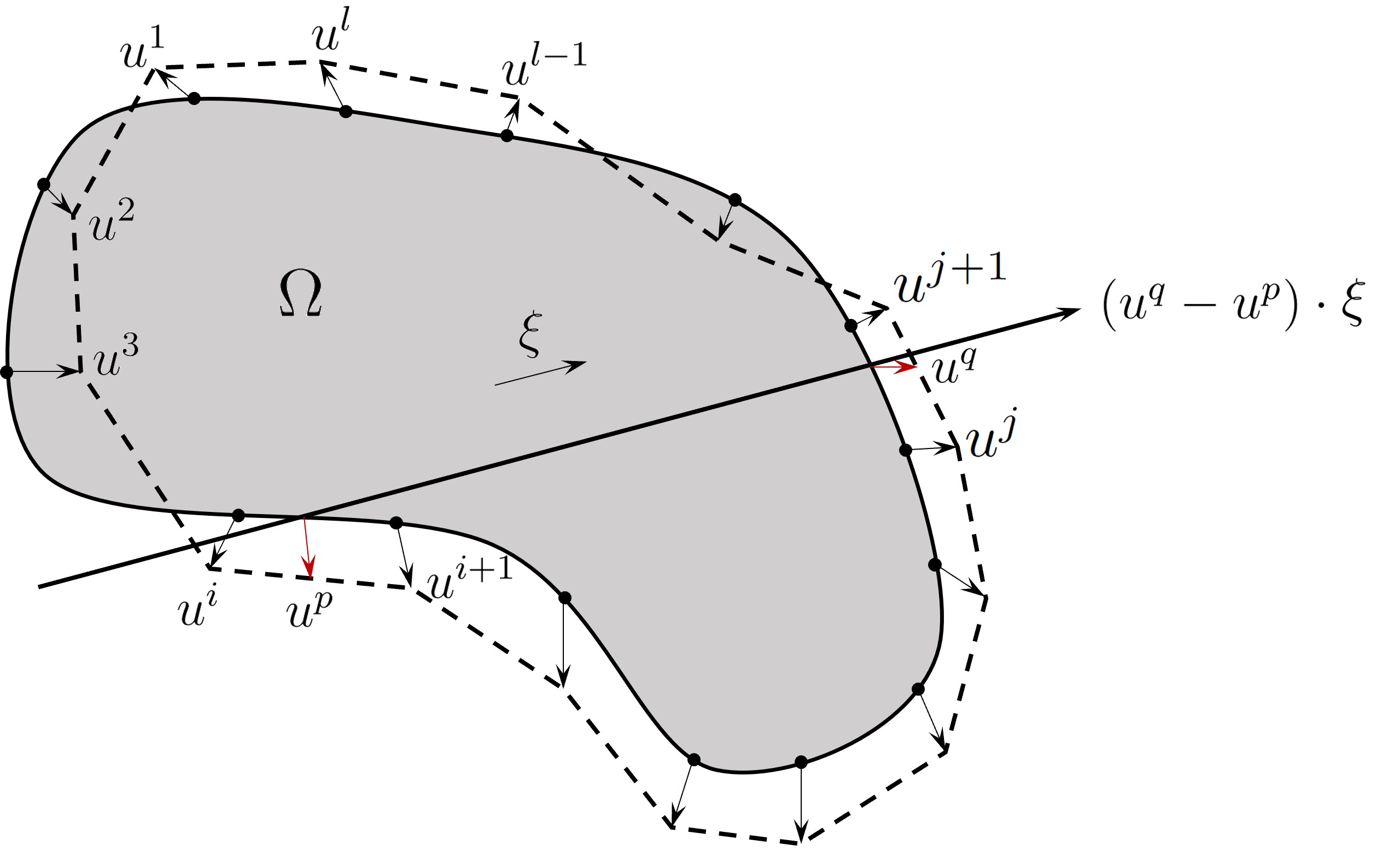}
    \caption{\label{Fig:NumericalScheme}A schematic of the numerical scheme to reconstruct $u$ on a discretised boundary.}
\end{center}
\end{figure} 

From this we can construct a system of linear equations that can be solved for the unknown nodal displacements; one equation for each ray that intersects the boundary.  For $M$ rays, this has the form
\begin{equation}
    \label{AU=R}
    A_{[M\times 2l]}U_{[2l \times 1]}=R_{[M \times 1]},
\end{equation}
where $A$ is a matrix of coefficients with each row constructed in line with \eqref{rayapprox}, $U$ is a vector containing unknown components of the nodal displacements and $R$ is a vector containing values of the LRT for each corresponding ray.  With sufficient rays ($M>2l$) the system can be over determined, however the insensitivity to rigid body motion will cause $A$ to be rank deficient by exactly three times the number of separate boundary components regardless of the number of rays.

Once constructed, \eqref{AU=R} can be solved using the Moore-Penrose pseudo-inverse of $A$ to yield $U$ along with an arbitrary rigid body motion chosen to minimise $||U||$ subject to \eqref{AU=R}.

\subsection{Implementation and numerical stability}

The scheme was implemented in MATLAB utilising the `\texttt{polyxpoly}' intrinsic function to locate points of intersection between incident rays and boundary segments.  

Numerical stability of the process was investigated by examining the singular values of $A$ for a range of different boundaries of one or more separate components.  Each of these boundary components was represented by an ordered set of 1000 points in the plane.  In each case, the matrix $A$ was constructed based on 500 projections uniformly spaced over one rotation with each projection consisting of multiple rays with an equal spacing of 0.03 units.

Figures \ref{LobesSV} to \ref{ThreeDisks} show the results of this process in terms of the range of singular values ordered from highest to lowest.  In each case the singular values were approximately in the range of 0.1 to 10 except for a number of `zero' singular values of the order of $10^{-15}$.  As expected, the number of zero-singular values corresponded to three times the number of separate boundary components.  Outside of these zero-singular values, the condition number of $A$ was very low; of the order of $10^2$.

\begin{figure}
\begin{center}
    \includegraphics[width=0.69\linewidth]{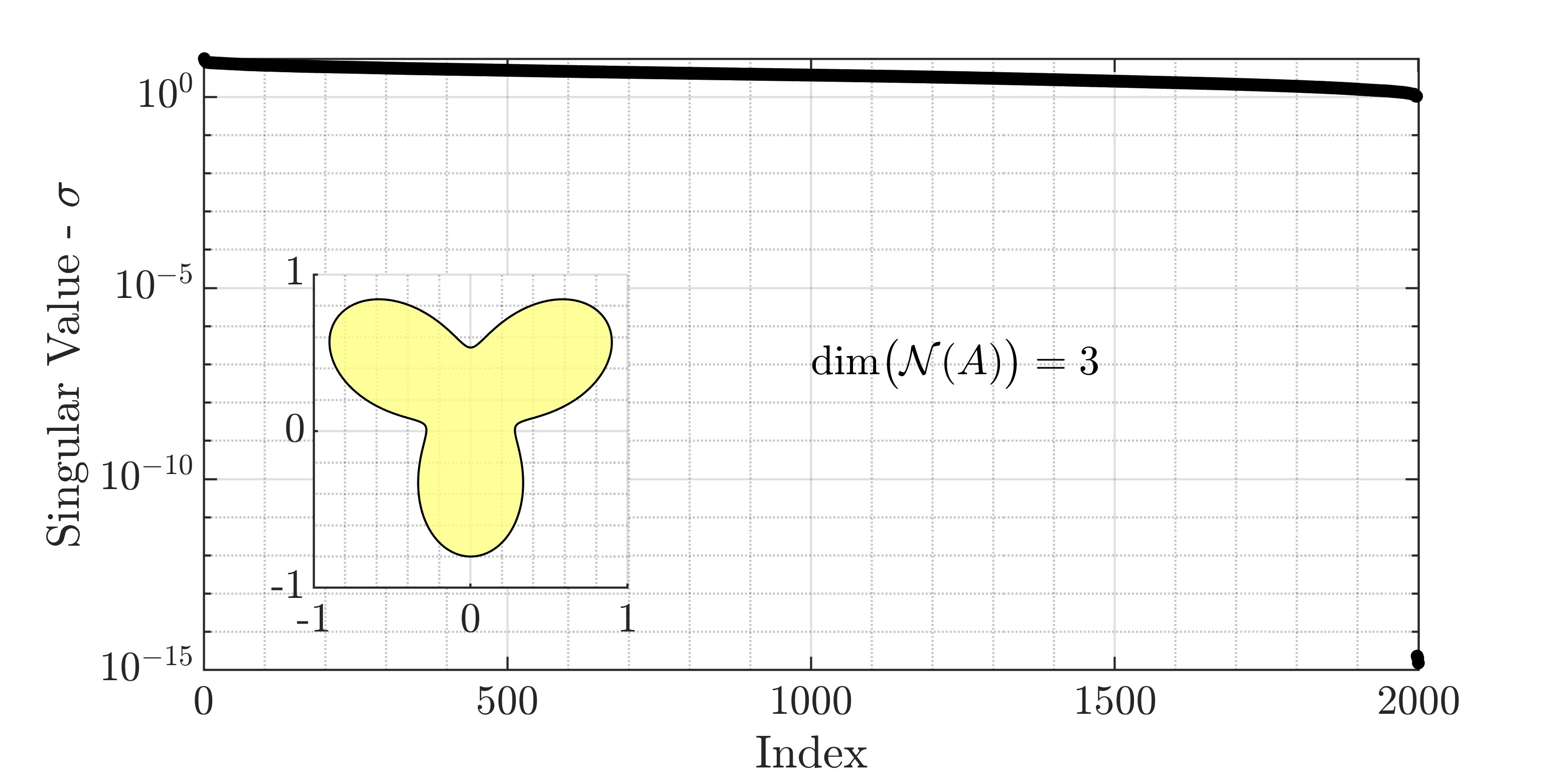} \\
    \includegraphics[width=0.3\linewidth]{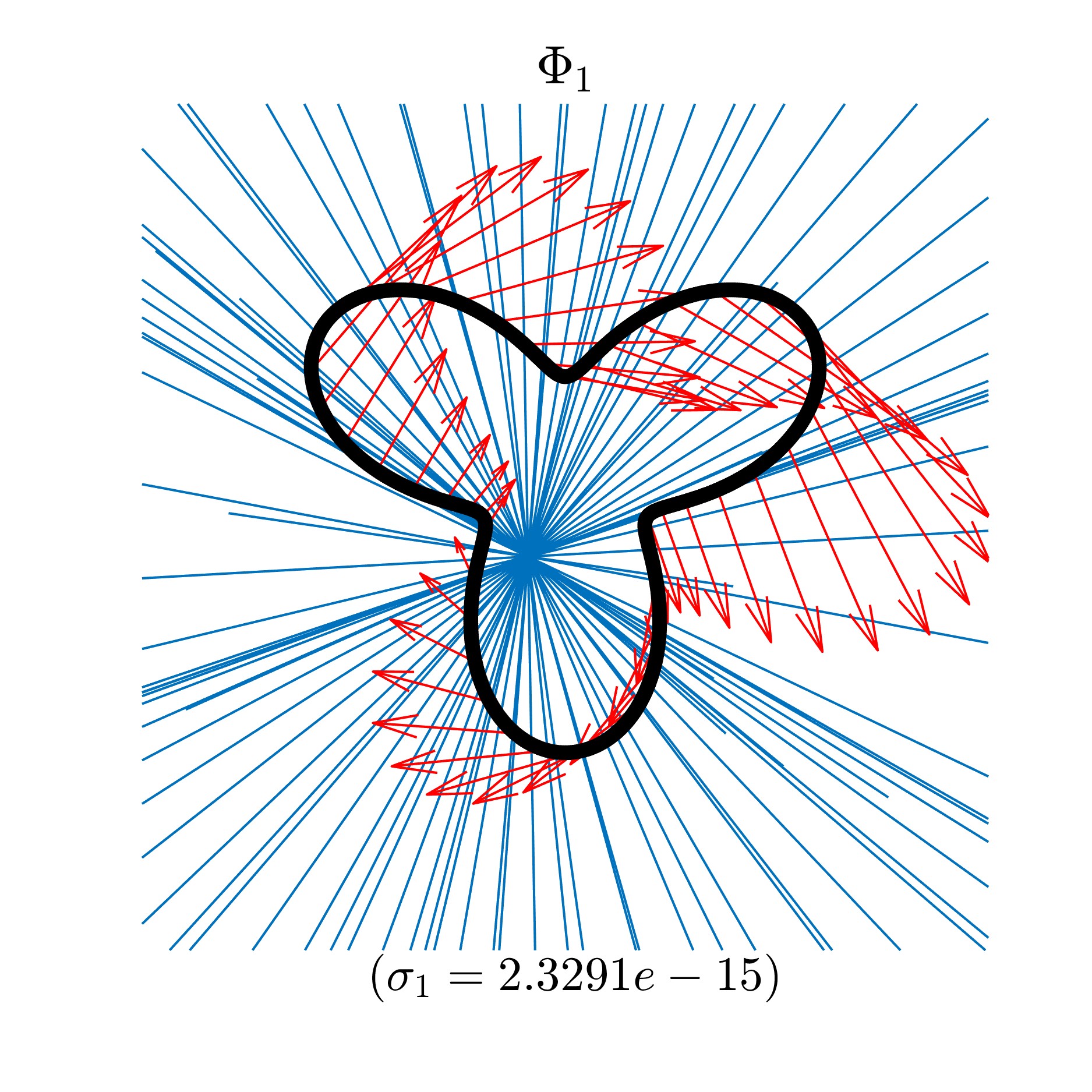}
    \includegraphics[width=0.3\linewidth]{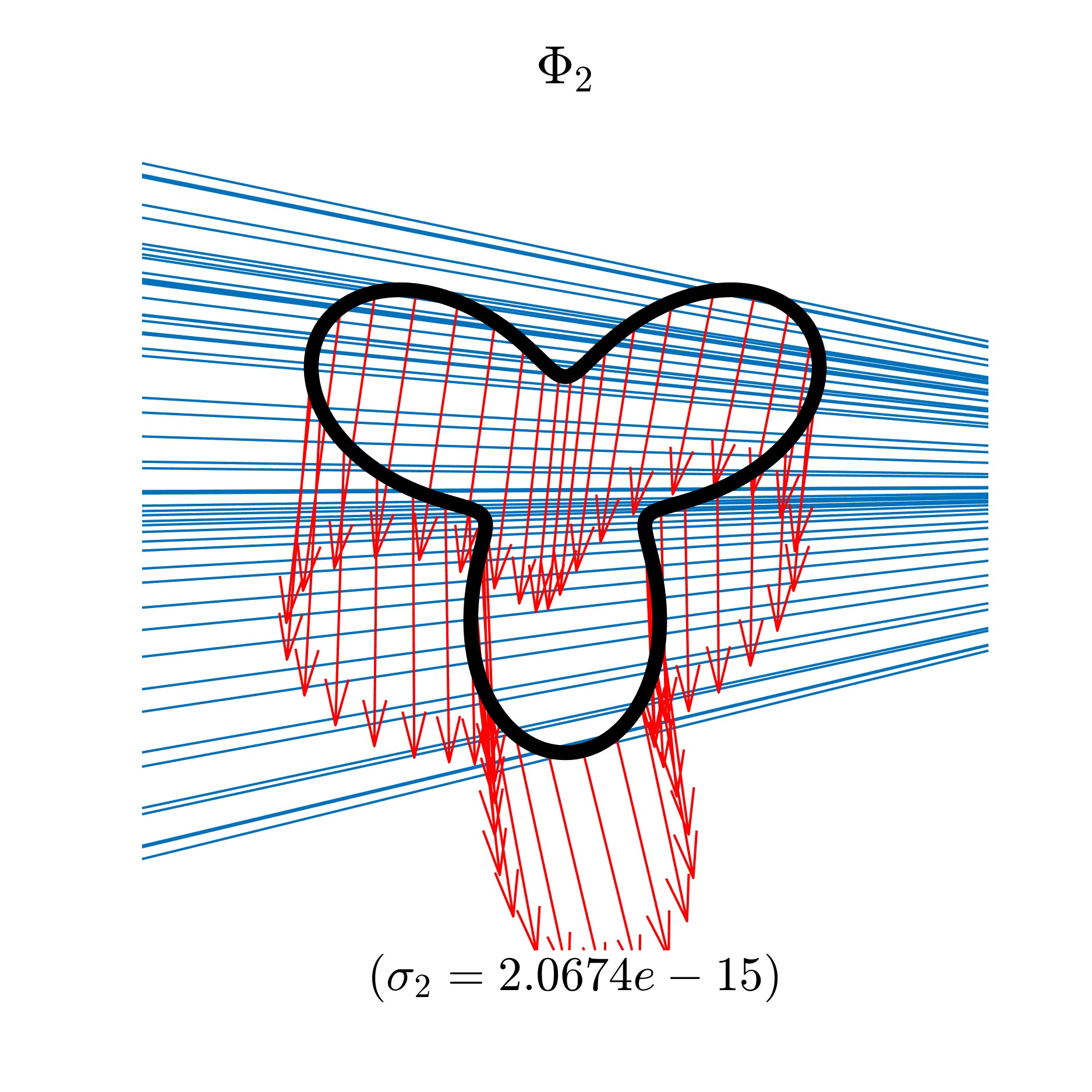}
    \includegraphics[width=0.3\linewidth]{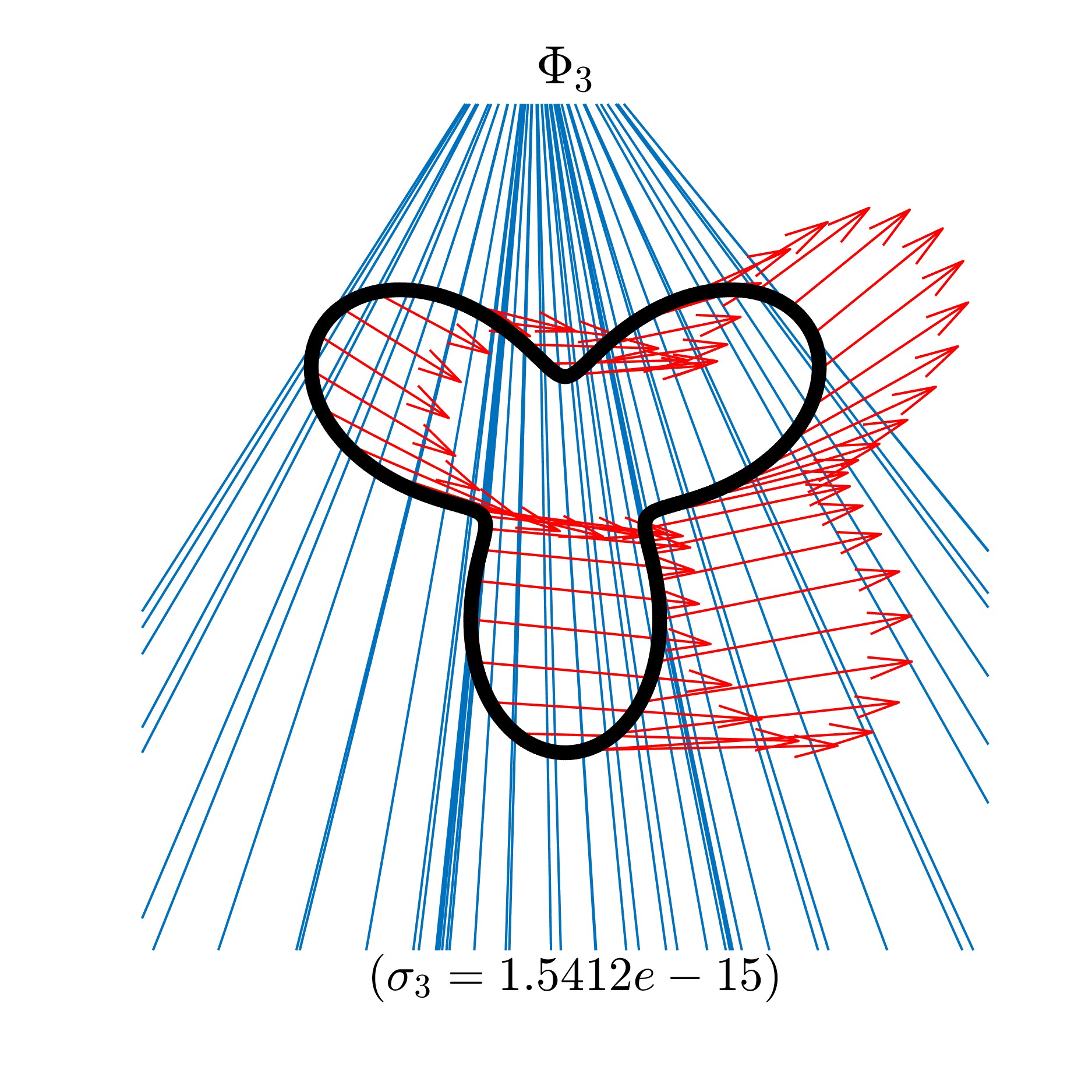}
    \caption{Singular value decomposition of $A$ for a non-convex 3-lobed shape.  Top: Singular values of $A$ listed in decreasing order.  Bottom: Zero-singular vectors of $A$.  }
    \label{LobesSV}
\end{center}
\end{figure} 

Also shown in Figures \ref{LobesSV} to \ref{ThreeDisks} are the singular vectors corresponding to the zero singular values of $A$; representing a possible basis for the null-space of $\mathcal{N}(A)$.  These are depicted with red arrows showing the displacement of corresponding black boundary nodes and blue lines drawn normal to the red displacement vectors.  In all cases, convergence of the blue lines to a single point (sometimes at infinity) for each separate boundary component was observed. 
These points represent instant centres of rotation for corresponding rigid body motions of the boundaries.

\begin{figure}
\begin{center}
    \includegraphics[width=0.69\linewidth]{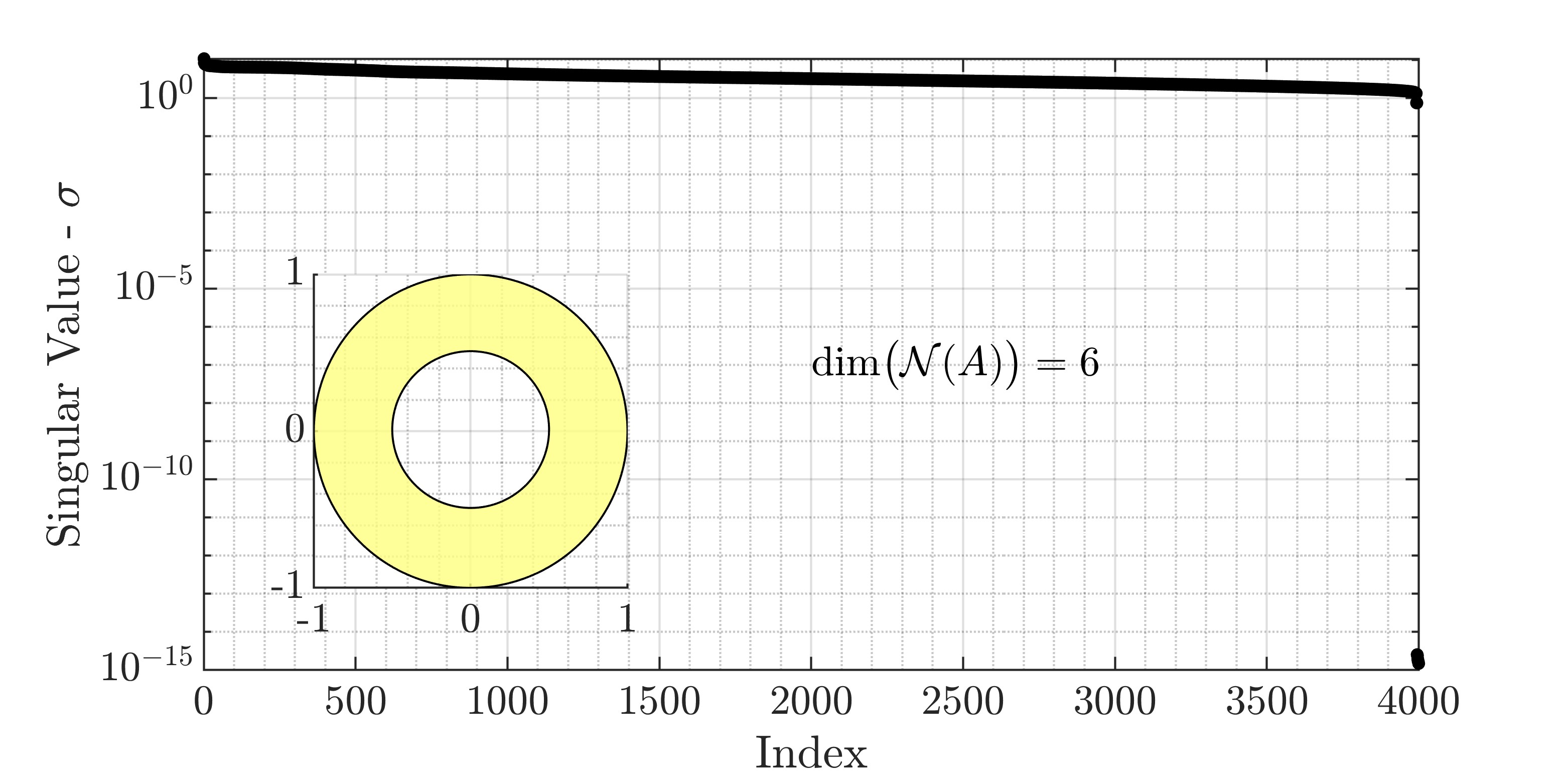} \\
    \includegraphics[width=0.3\linewidth]{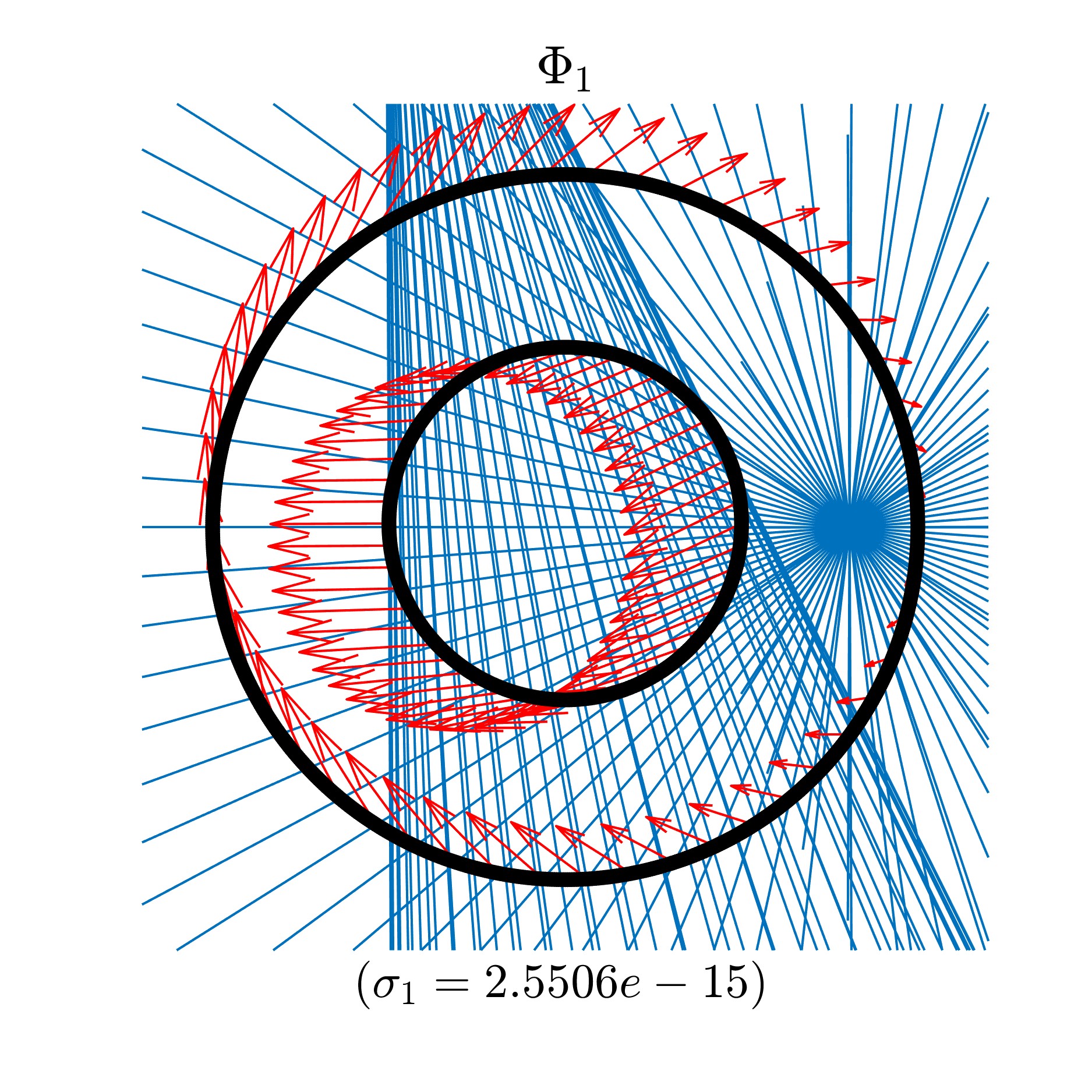}
    \includegraphics[width=0.3\linewidth]{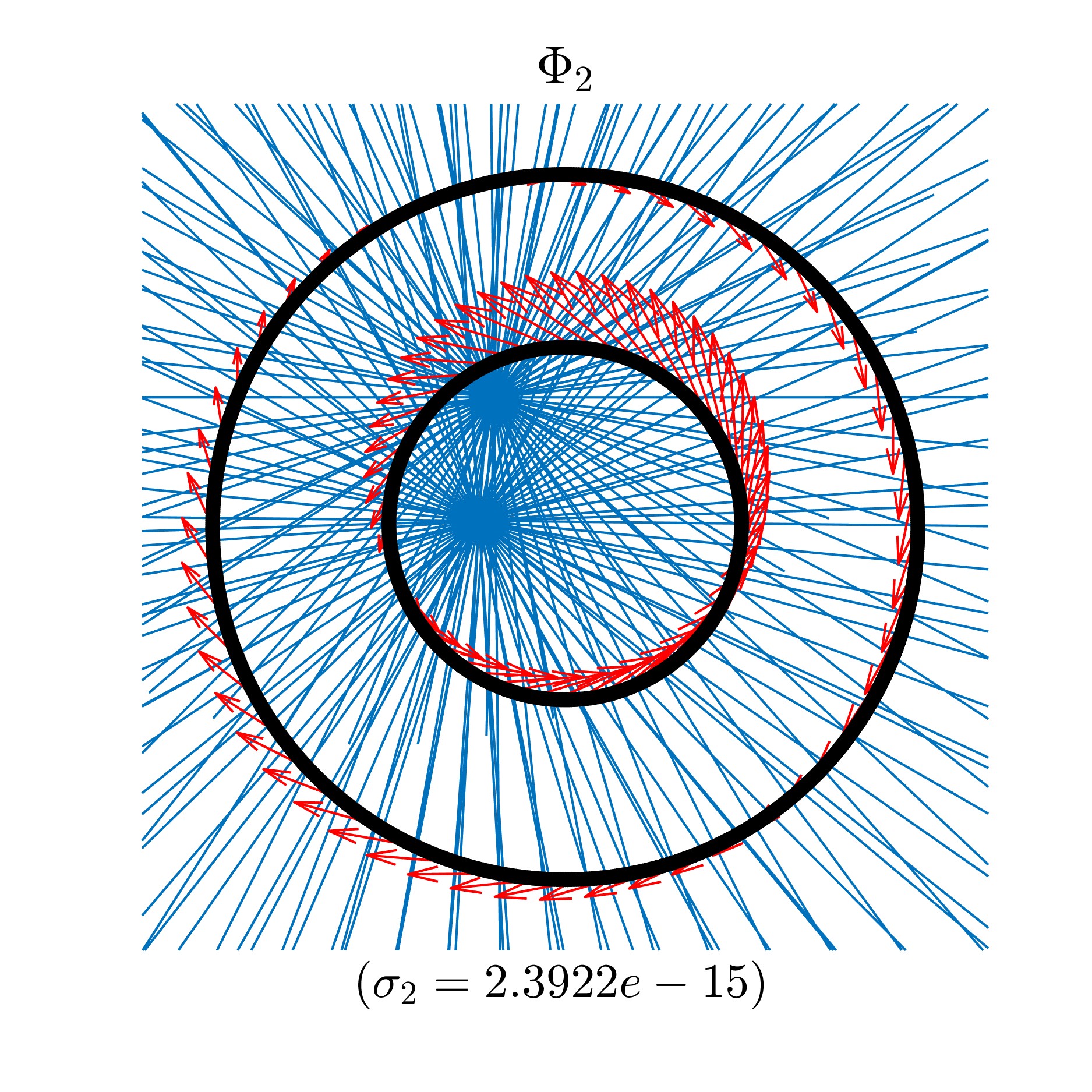}
    \includegraphics[width=0.3\linewidth]{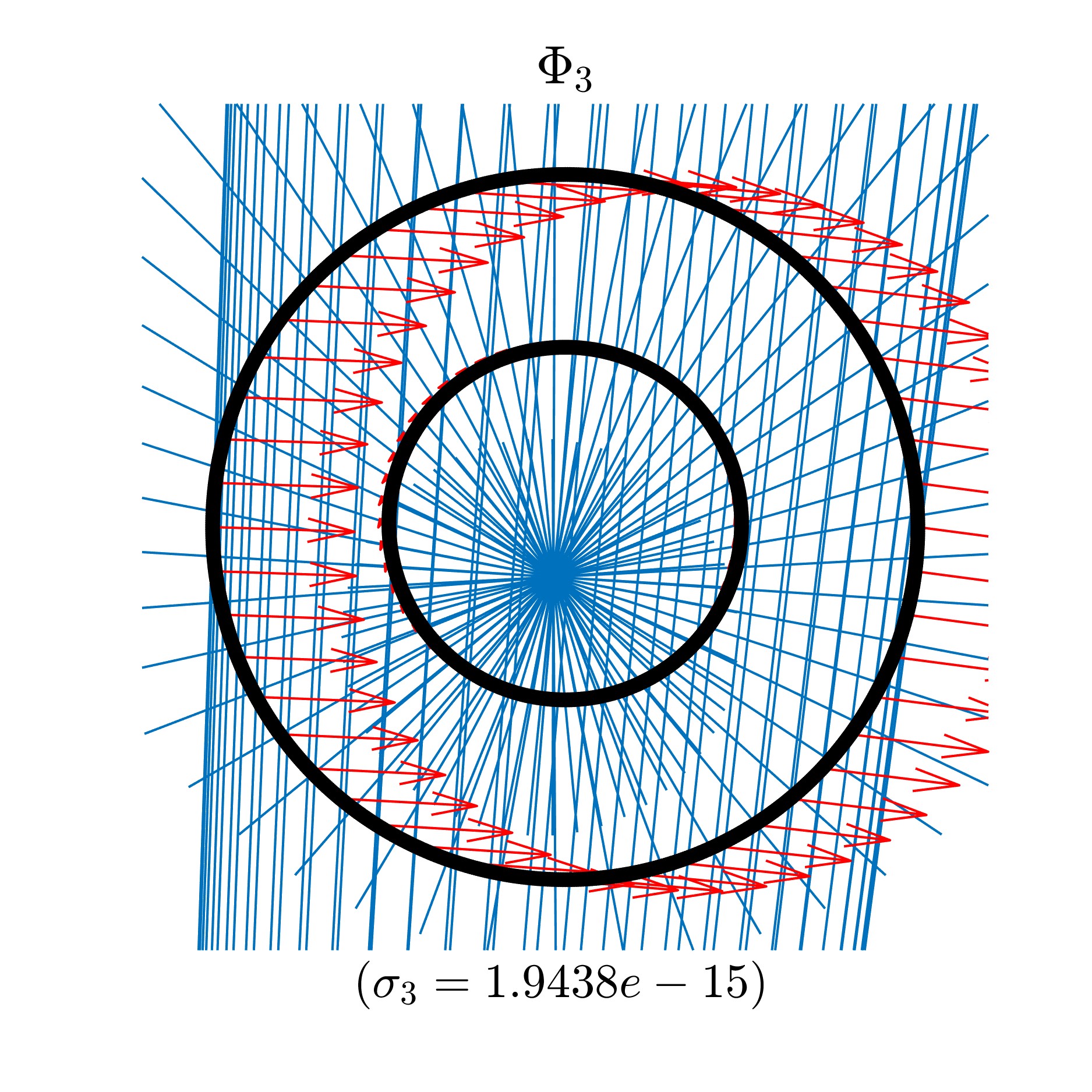}
    \includegraphics[width=0.3\linewidth]{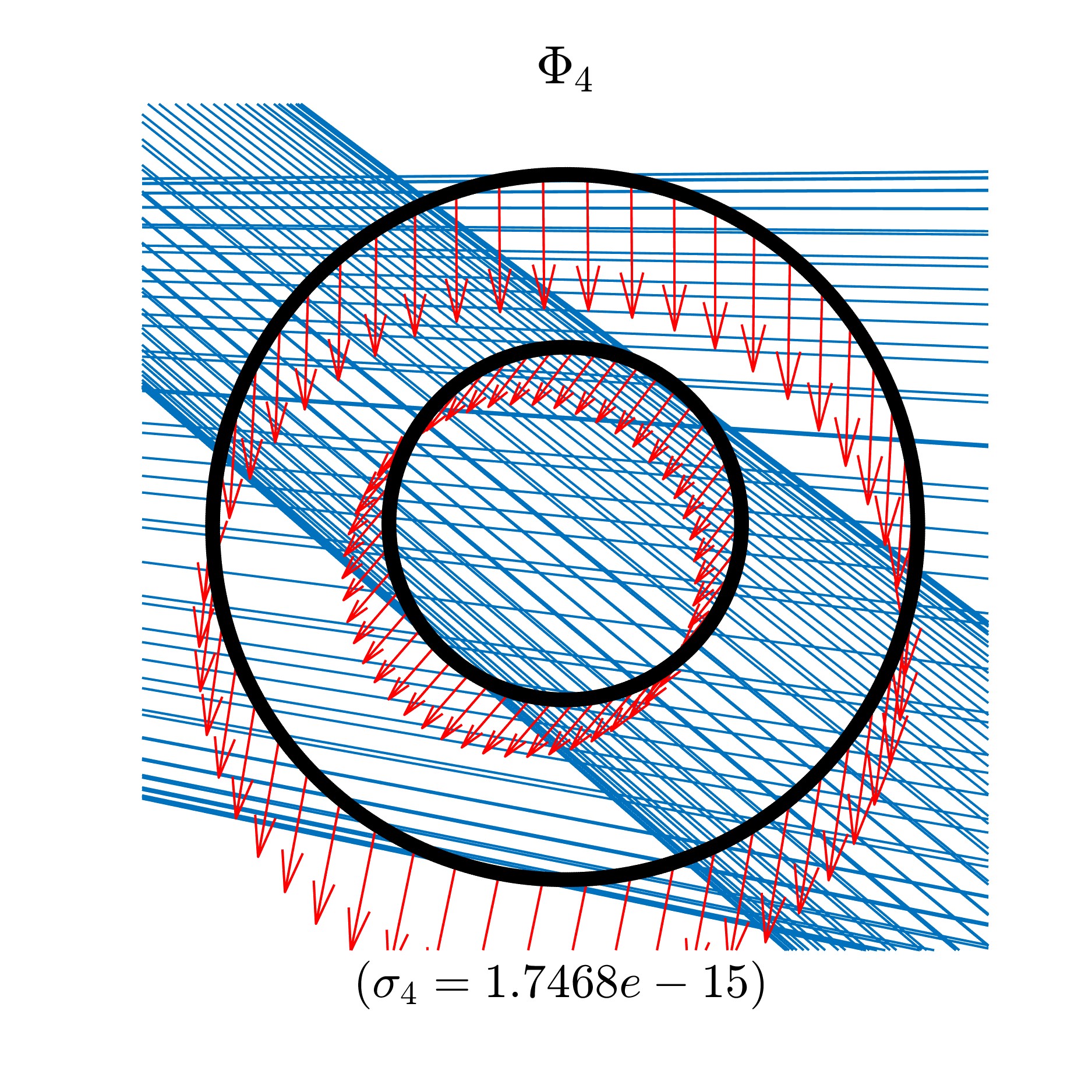}
    \includegraphics[width=0.3\linewidth]{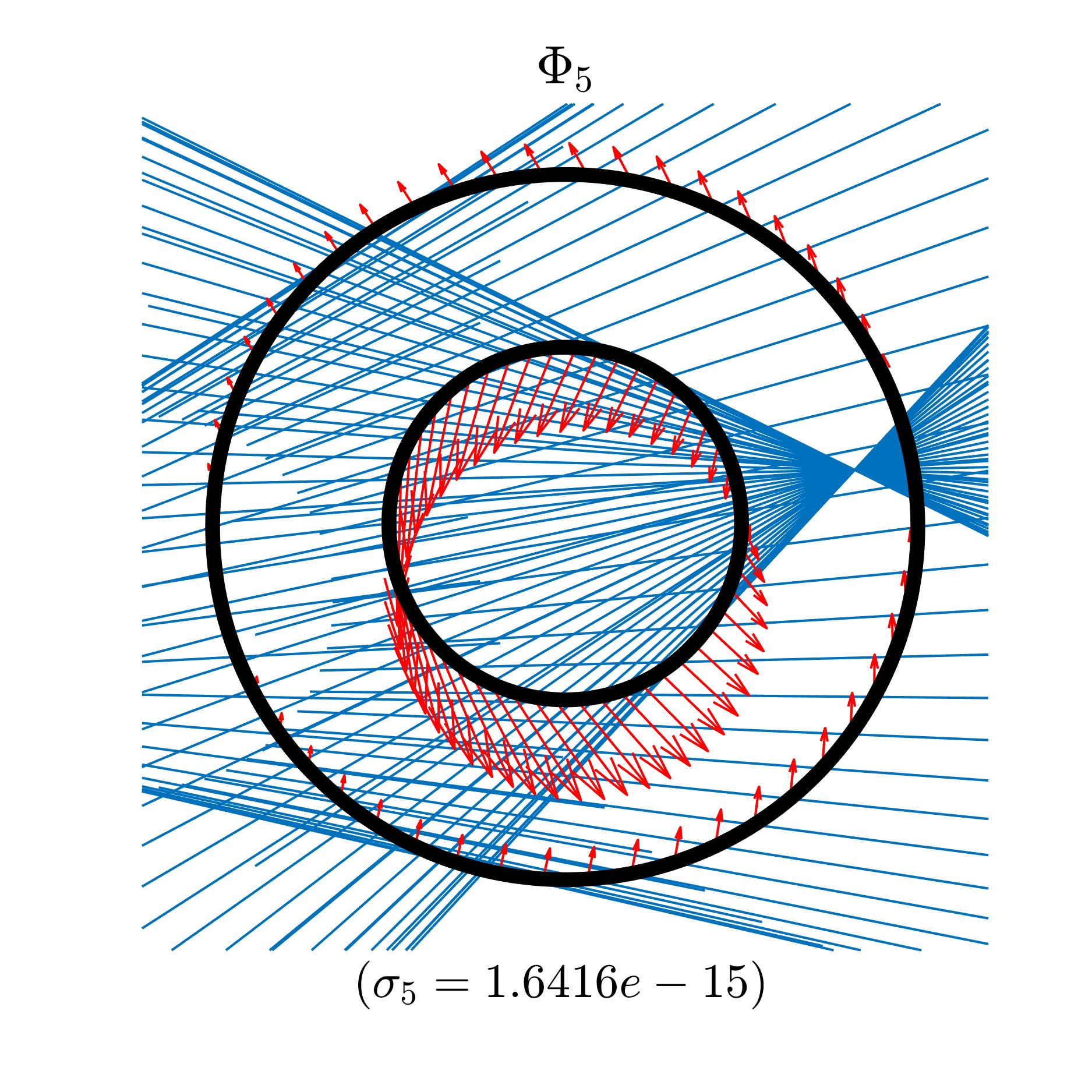}
    \includegraphics[width=0.3\linewidth]{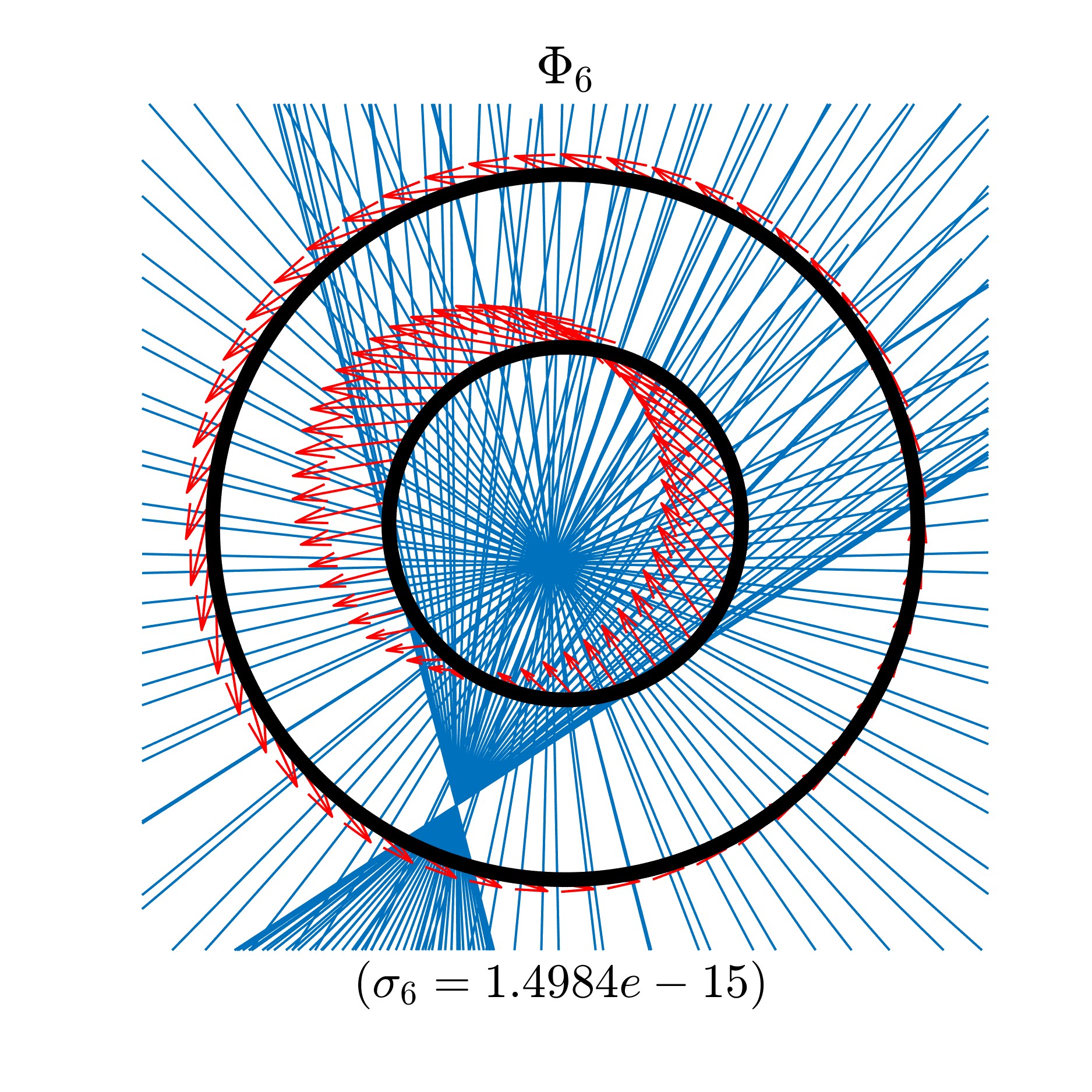}
    \caption{Singular value decomposition of $A$ for a system consisting of a disk with a circular hole.  Top: Singular values of $A$ listed in decreasing order.  Bottom: Zero-singular vectors of $A$.  }
    \label{DiskHoleSV}
\end{center}
\end{figure}

\begin{figure}
\begin{center}
    \includegraphics[width=0.69\linewidth]{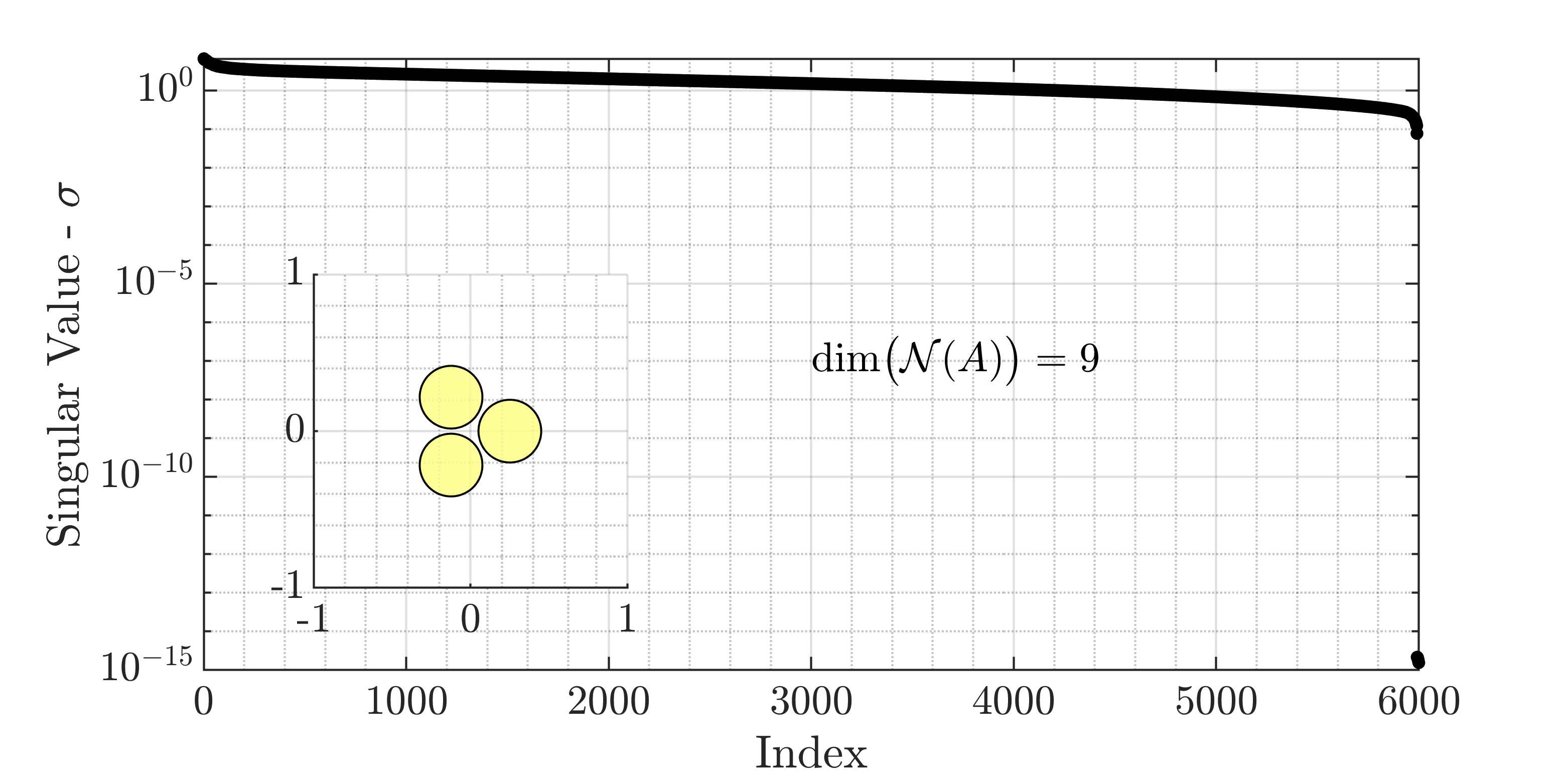} \\
    \includegraphics[width=0.3\linewidth]{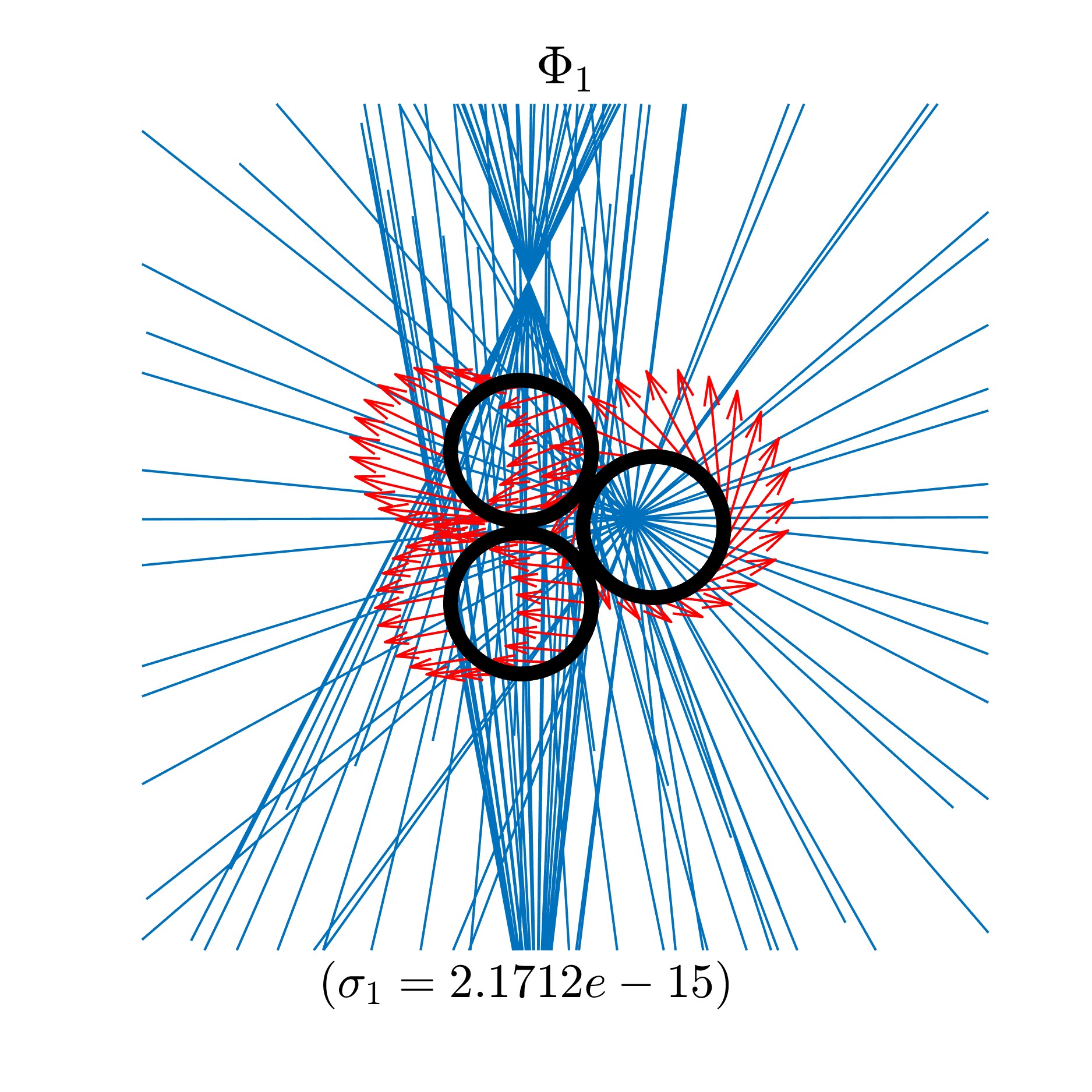}
    \includegraphics[width=0.3\linewidth]{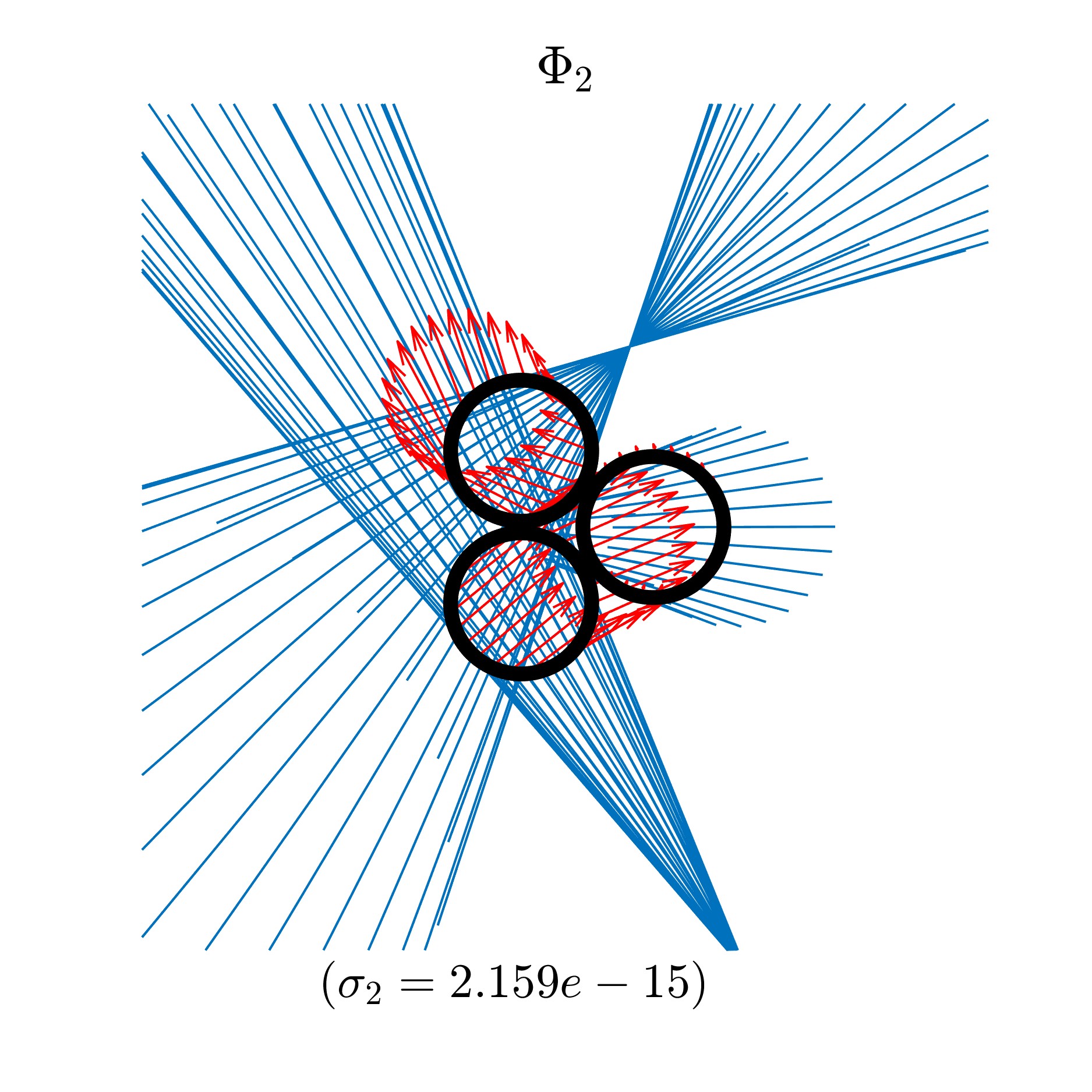}
    \includegraphics[width=0.3\linewidth]{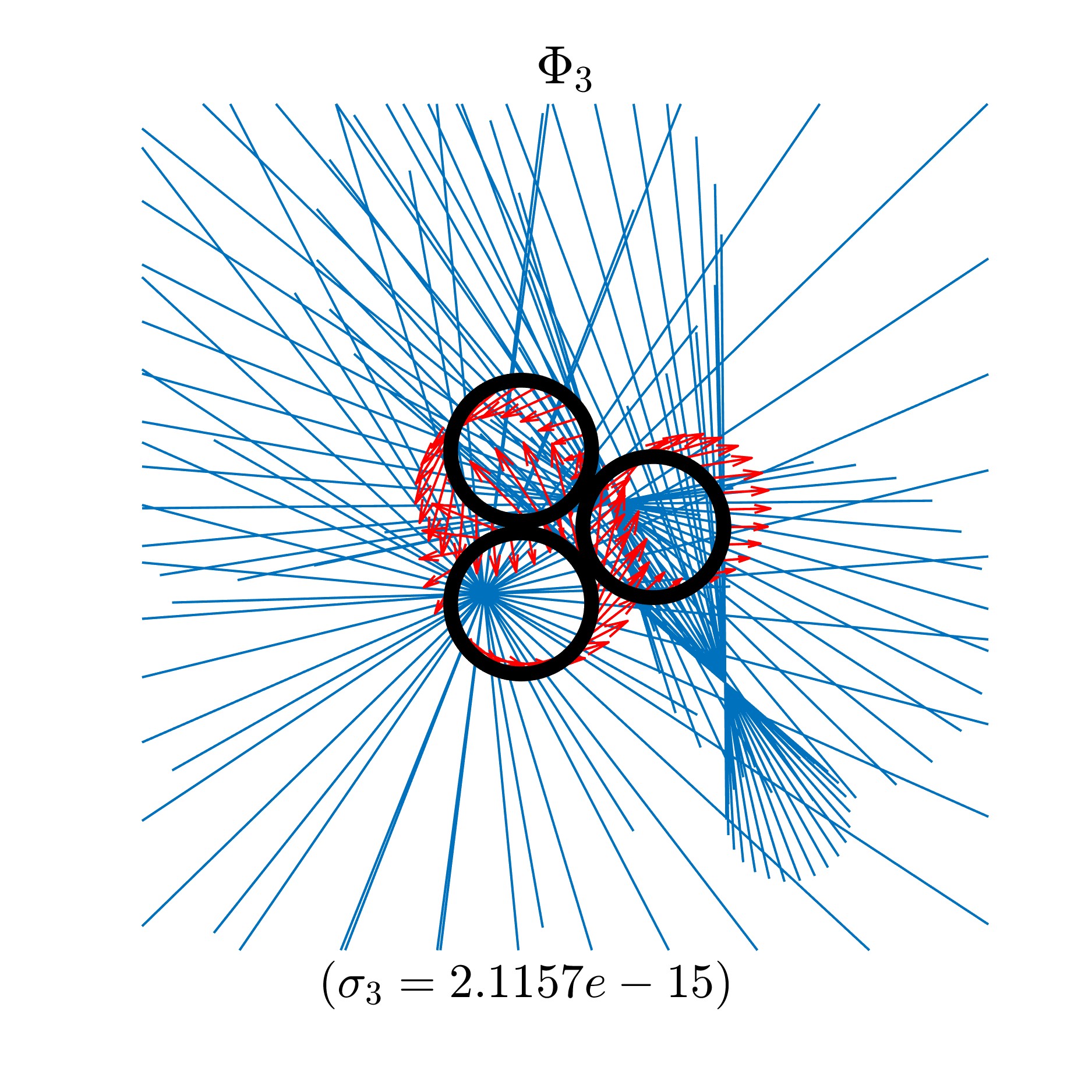}
    \includegraphics[width=0.3\linewidth]{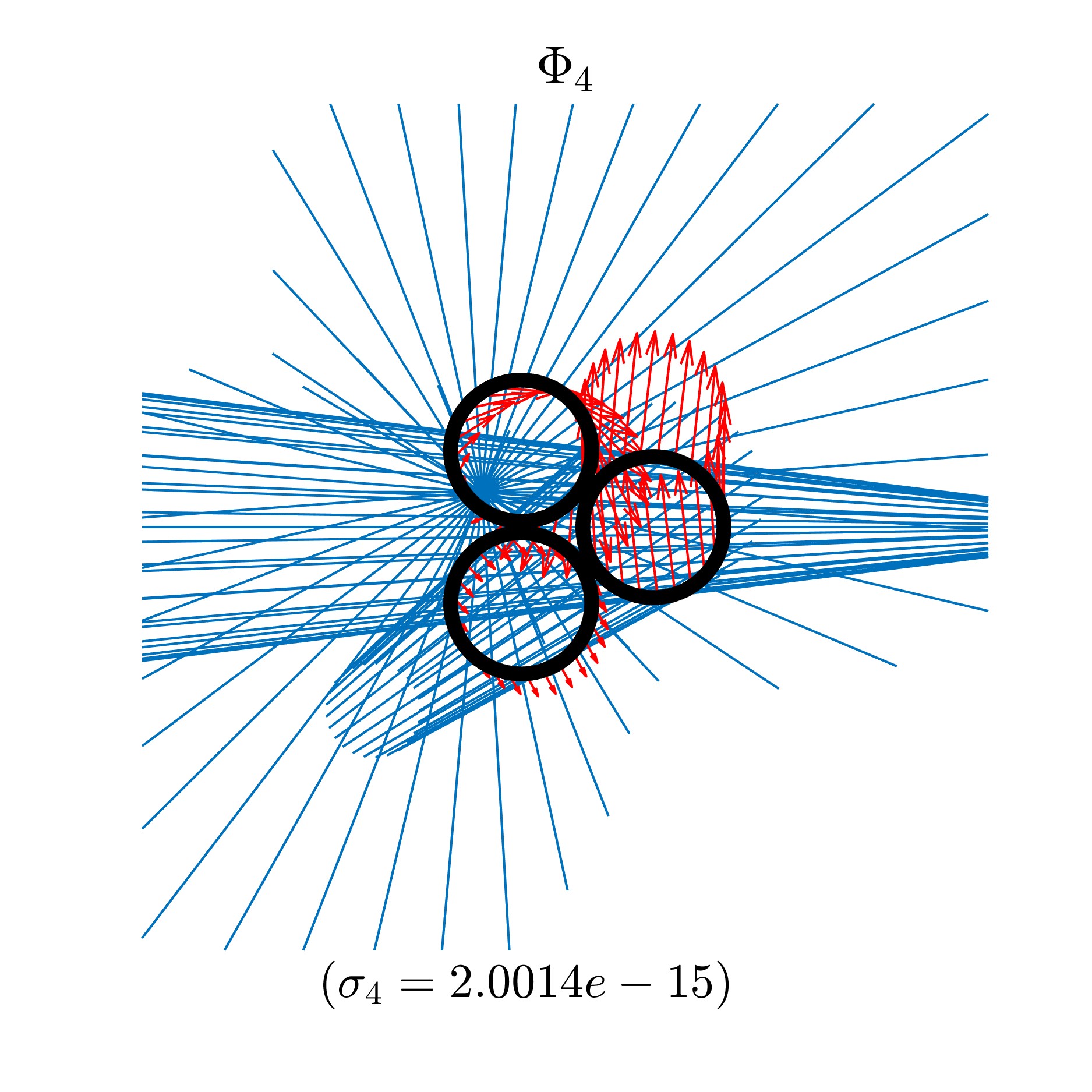}
    \includegraphics[width=0.3\linewidth]{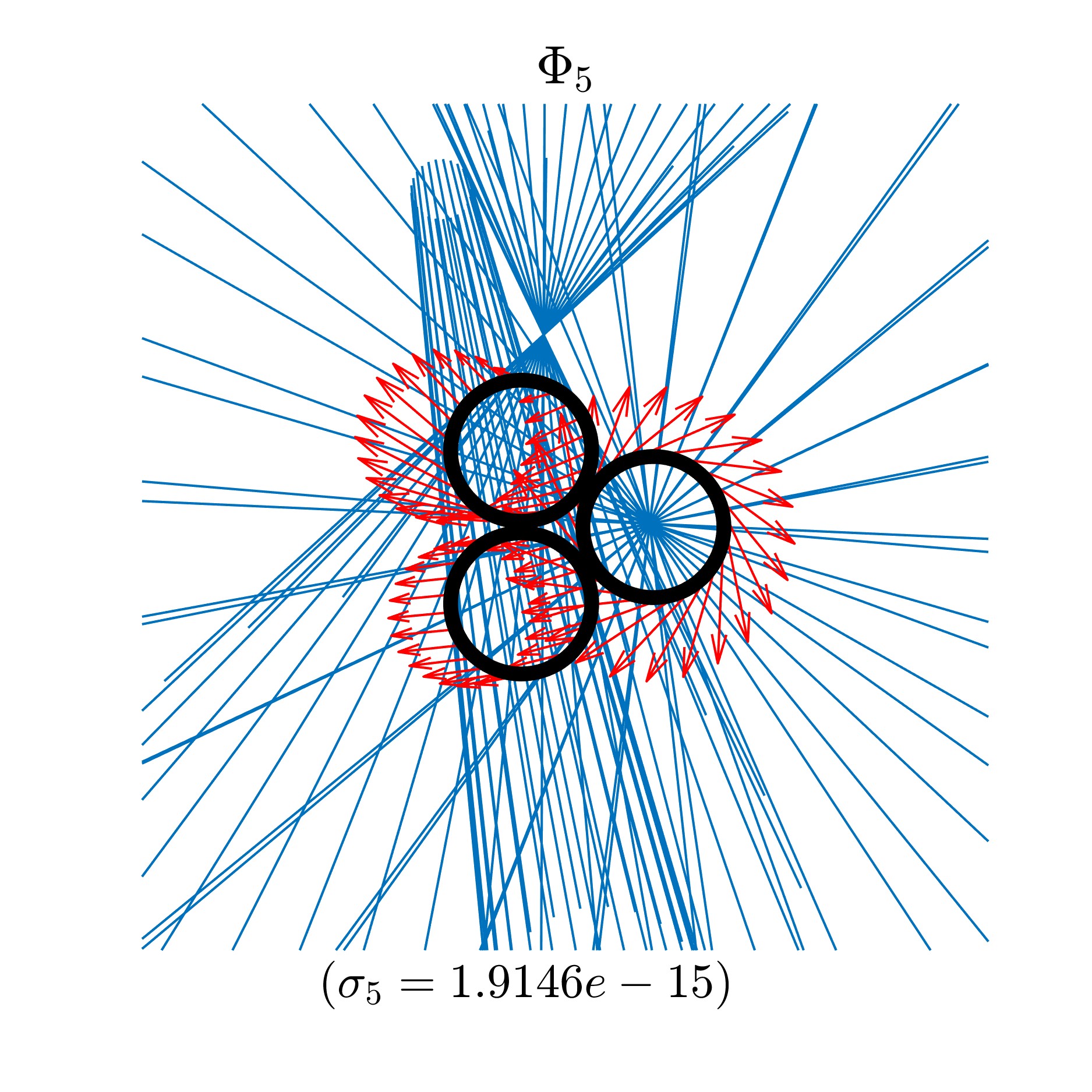}
    \includegraphics[width=0.3\linewidth]{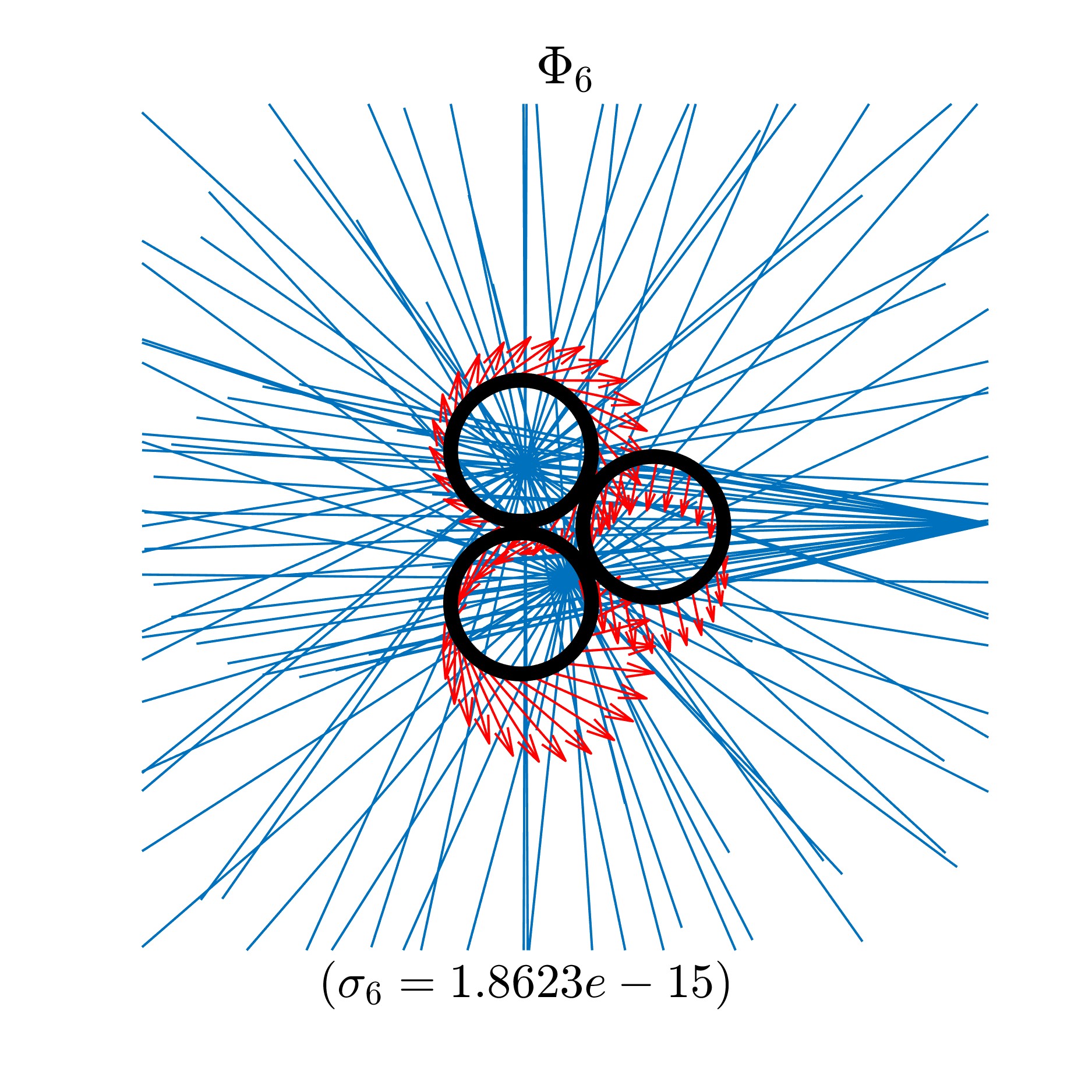}
    \includegraphics[width=0.3\linewidth]{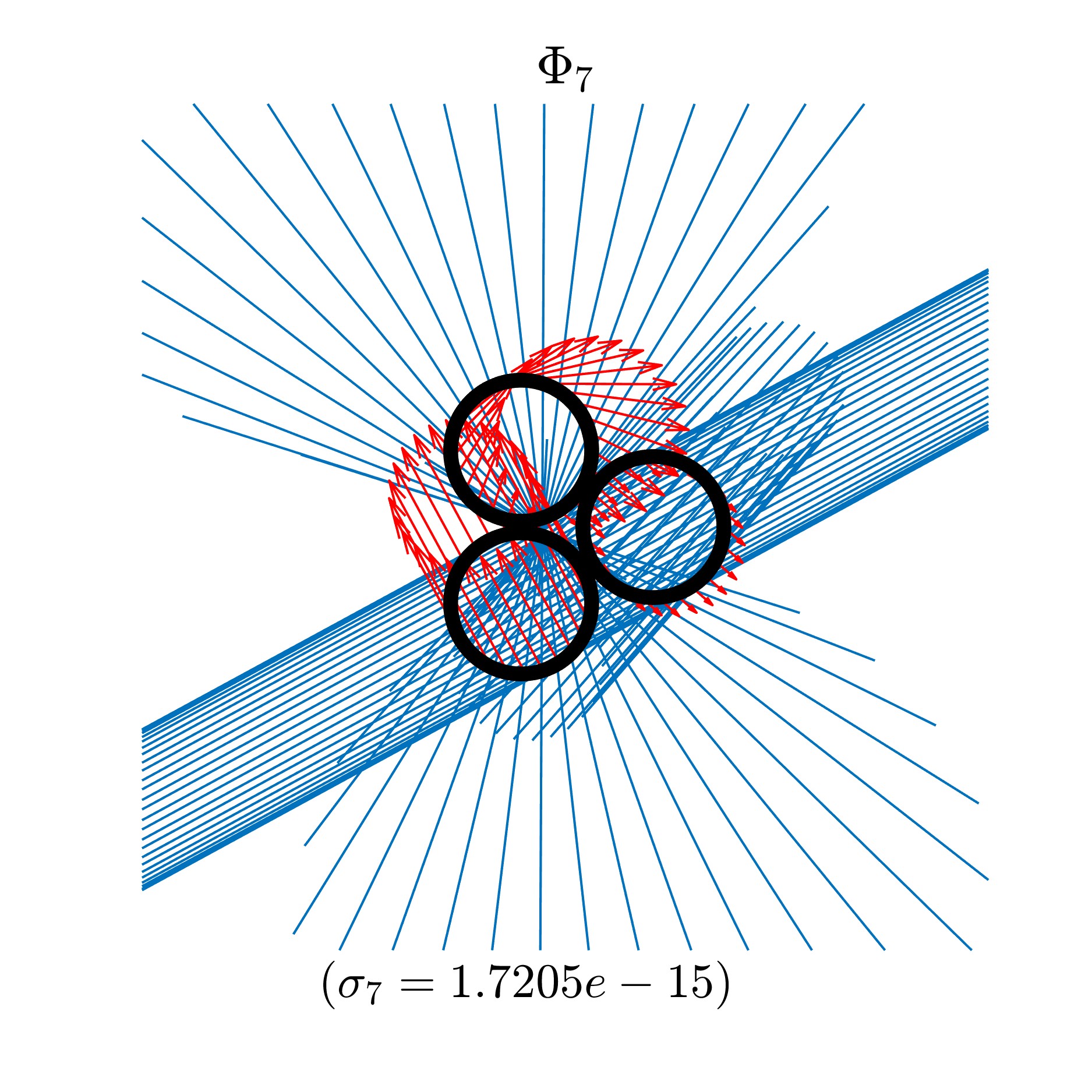}
    \includegraphics[width=0.3\linewidth]{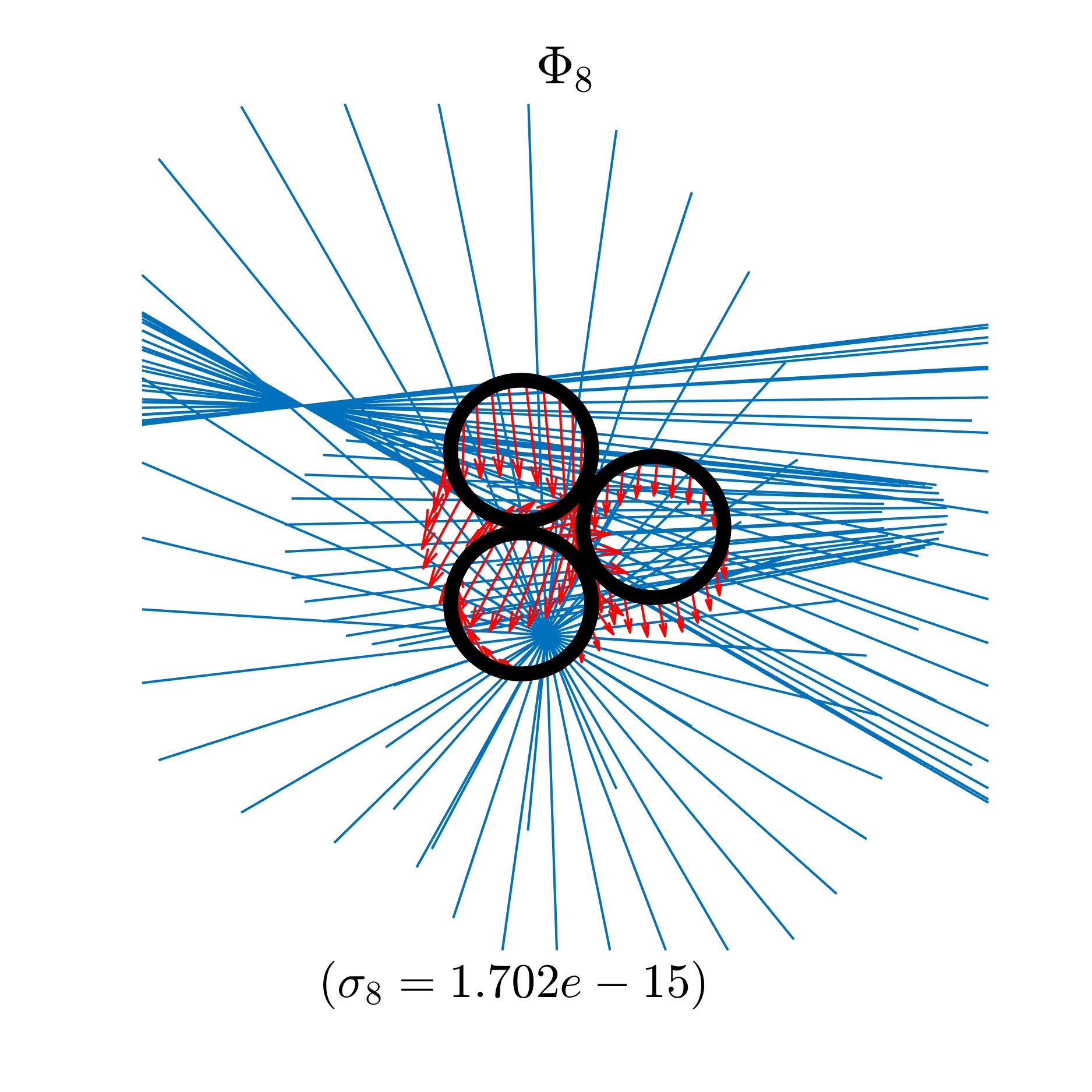}
    \includegraphics[width=0.3\linewidth]{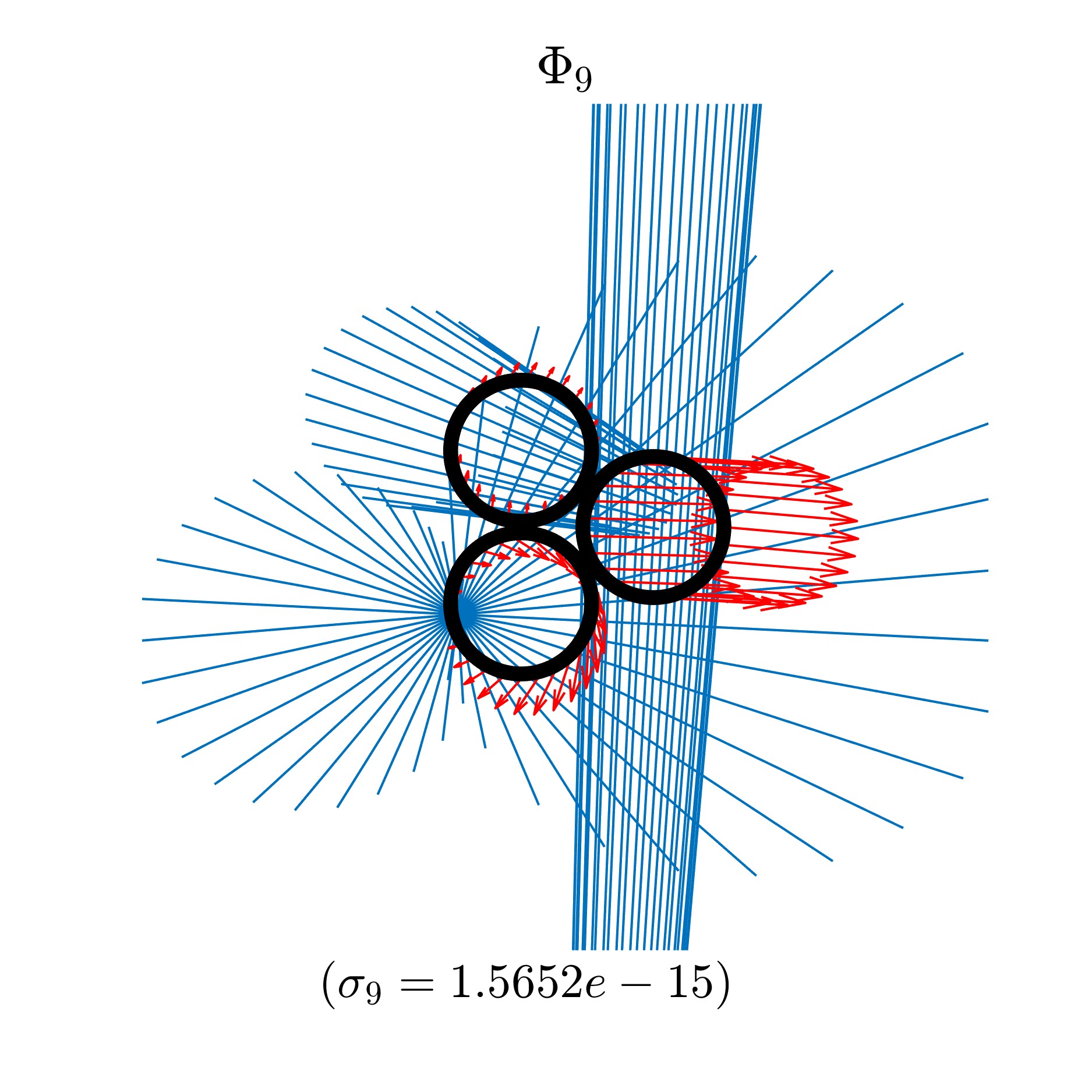}
    \caption{Singular value decomposition of $A$ for a system consisting of three unconnected disks.  Top: Singular values of $A$ listed in decreasing order.  Bottom: Zero-singular vectors of $A$. }
    \label{ThreeDisks}
\end{center}
\end{figure}

\section{Reconstruction of $\epsilon$ from $I\epsilon$}

\subsection{Numerical algorithm}
\label{algorithm}

Based on the above, numerical reconstruction of a plane-stress\footnote{A plane-stress state is an embedding of a three-dimensional stress state into $\mathbb{R}^2$ where $\sigma_{i3}=0 \enspace \forall i\in\{1,2,3\}$.  Plane-strain is a similar condition where $\epsilon_{i3}=0 \enspace \forall i\in\{1,2,3\}$} strain field $\epsilon$ from $I\epsilon$ for an isotropic material can be carried out in the MATLAB environment using the following procedure;

\begin{enumerate}

    \item{From a given discrete strain-sinogram, compute the three unique components of $^s\epsilon$ based on \eqref{2DInvFormula}.  Practically this can be achieved by incremental application of the \texttt{`iradon'} intrinsic function with appropriate weighting relative to the projection direction.}
    
    \item{Calculate the derivatives of the reconstructed components of ${^s}\epsilon$.  This can be implemented numerically in MATLAB by first transforming ${^s}\epsilon$ to the Fourier domain using the `\texttt{fft2}' and `\texttt{fftshift}' intrinsic MATLAB functions before multiplying by appropriate complex functions of spatial frequency and transforming back to the real domain using `\texttt{ifftshift}' and `\texttt{ifft2}'.}

    \item{From these derivatives we can compute the right hand side of Equation \eqref{ElasticModel}, which we now label $b=-\rm{Div}(C:{^s}\epsilon)$ .  Under two-dimensional plane-stress conditions, and for an isotropic material, this can be written
    \begin{align}
        \label{bx}
        b_1&=-\frac{E}{1-\nu^2}\Big(\frac{\partial{{^s}\epsilon_{11}}}{\partial x_1} + \nu \frac{\partial{{^s}\epsilon_{22}}}{\partial x_1} + (1-\nu) \frac{\partial{{^s}\epsilon_{12}}}{\partial x_2}\Big), \\
        b_2&=-\frac{E}{1-\nu^2}\Big(\nu\frac{\partial{{^s}\epsilon_{11}}}{\partial x_2} + \frac{\partial{{^s}\epsilon_{22}}}{\partial x_2} + (1-\nu) \frac{\partial{{^s}\epsilon_{12}}}{\partial x_1}\Big),
        \label{by}
    \end{align}
    where $E$ and $\nu$ are Young's modulus and Poisson's ratio respectively.  Note that similar expressions exist in the case of plane-strain.}

    \item{\label{IsepsCalc}Mask ${^s}\epsilon$ to zero outside the known boundary and then apply the LRT.  This can be implemented by successive application of the \texttt{`radon'} intrinsic function as follows;
    \[
        I{^s}\epsilon(s,\theta)=\mathcal{R}[\cos^2\theta{^s}\epsilon_{11} +2 \cos\theta\sin\theta{^s}\epsilon_{12}+\sin^2\theta{^s}\epsilon_{22}]
    \]
    where $\theta$ defines the angle of the projection direction $\xi=(\cos\theta,\sin\theta$).}

    \item{\label{residCalc}Compute the left hand side of \eqref{residual} by subtracting the result of step \ref{IsepsCalc} from the original strain-sinogram.}

    \item{\label{Ubdry}The result of step \ref{residCalc} corresponds to the right hand side of \eqref{AU=R}.  After forming $A$ through the process described in Section \ref{NumericalScheme}, solve for the unknown boundary displacements using the Moore-Penrose pseudoinverse as implemented in the \texttt{`pinv'} MATLAB intrinsic function.}

    \item{\label{PotSol}The potential part $du$ can then be recovered by solving the boundary value problem posed by \eqref{ElasticModel} together with the boundary displacements calculated in step \ref{Ubdry} applied as Dirichlet constraints.  This can be achieved using a finite element approach with the MATLAB PDE solver.}

    \item{The reconstructed strain field can then be calculated as the sum of the ${^s}\epsilon$ and $du$.}
    
\end{enumerate}

\subsection{Simulation}

The procedure outlined in Section \ref{algorithm} was implemented within MATLAB and tested on a simulated system defined on the unit disk $\mathbb{D}^2 \subset \mathbb{R}^2$.  Strain within this disk was computed from an Airy potential function of the form
\[
\phi(x_1,x_2) = x_1  \sin(3x_1 x_2^2) e^{x_1 x_2^2} + x_2 \cos(x_1^3),
\]
from which stress was defined as
\begin{equation}
\label{Airy}
\sigma = \chi\begin{bmatrix}
    \frac{\partial^2\phi}{\partial x_2^2} & -\frac{\partial^2\phi}{\partial x_1 \partial x_2}\\
    -\frac{\partial^2\phi}{\partial x_1 \partial x_2} & \frac{\partial^2\phi}{\partial x_1^2}
\end{bmatrix}
\end{equation}
where $\chi$ is the characteristic function for the disk.

Note that \eqref{Airy} guarantees $\sigma$ is divergence-free within the disk and trivially at all points outside.  The boundary of the disk is subject to a non-zero traction force calculated as $\tau=\sigma \cdot n |_{\partial\Omega}$ and shown in Figure \ref{fig:BdryTraction}.

\begin{figure}
    \centering
    \includegraphics[width=0.4\linewidth]{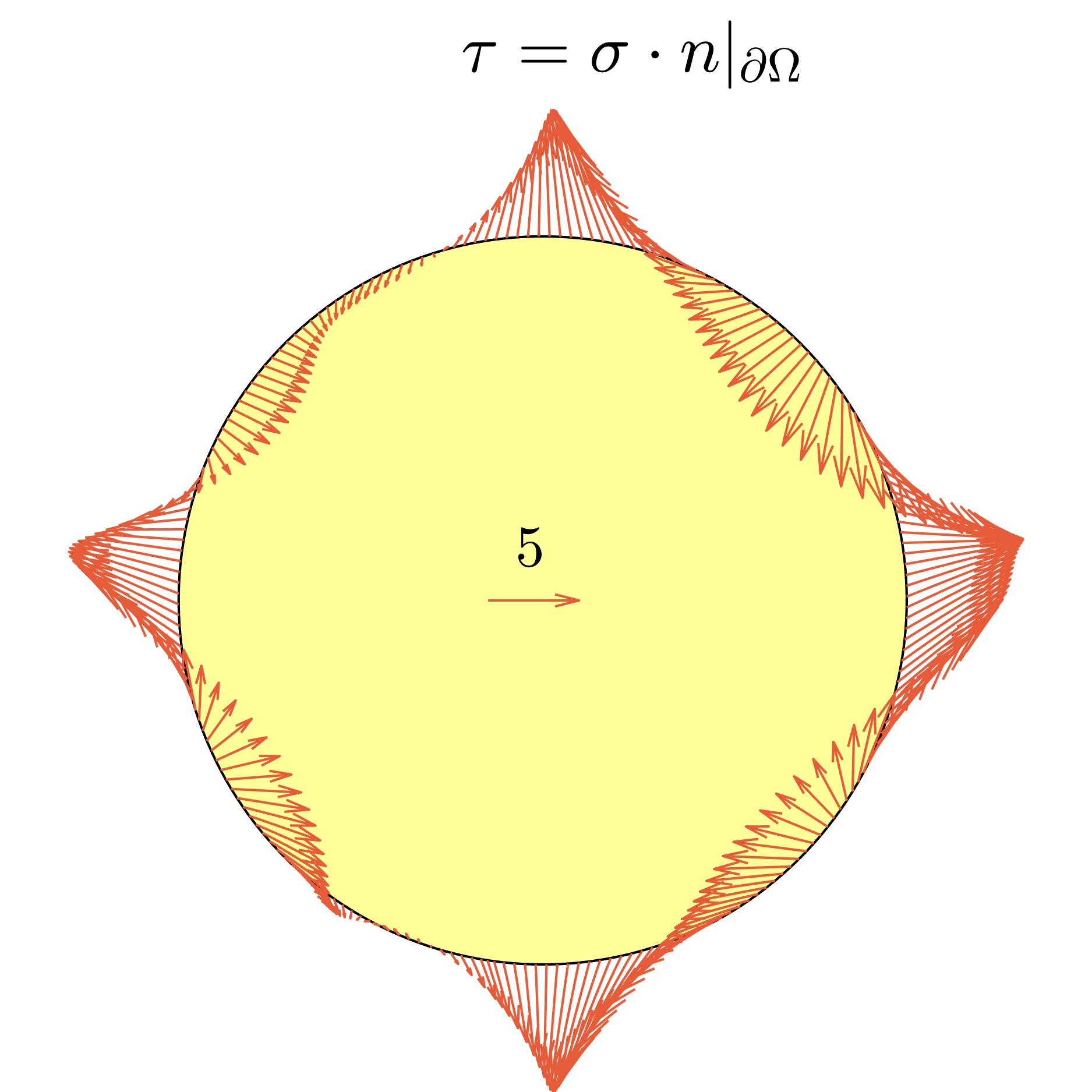}
    \caption{Boundary traction on the disk computed from the stress on $\partial \Omega$ (scale indicated by the central arrow).}
    \label{fig:BdryTraction}
\end{figure}

From Hooke's law and an assumption of plane stress in an isotropic material, strain within the disk can then be expressed as
\begin{align}
\epsilon_{11}&=\tfrac{1}{E}(\sigma_{11}-\nu\sigma_{22})\\
\epsilon_{22}&=\tfrac{1}{E}(-\nu\sigma_{11}+\sigma_{22})\\
\epsilon_{12}&=\tfrac{1}{E} (1 + \nu) \sigma_{12}.
\end{align}
Values of $E=1$ and $\nu=0.28$ were used.

The resulting strain field was forward-mapped by the LRT to provide a simulated data set for testing the algorithm. This involved computing 250 equally spaced projections over $360^\circ$ from a discretised version of the field on a regular $222 \times 222$ rectangular grid.  Projections were based on a ray spacing of 0.01 units, with the resulting sinogram shown in Figure \ref{SimResult}a.

\begin{figure}
    \begin{center}
            \includegraphics[width=0.66\linewidth]{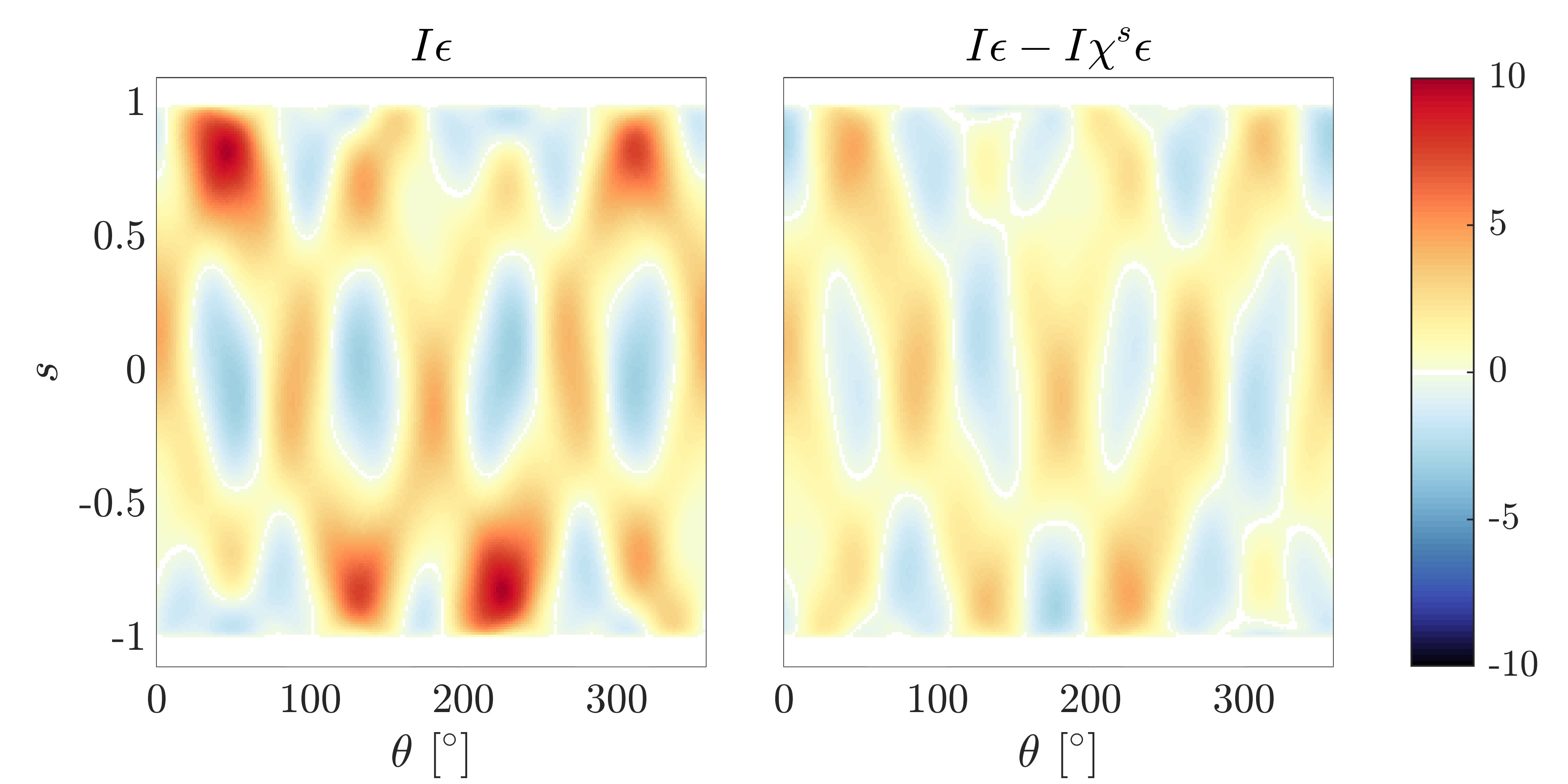}
        \includegraphics[width=0.33\linewidth]{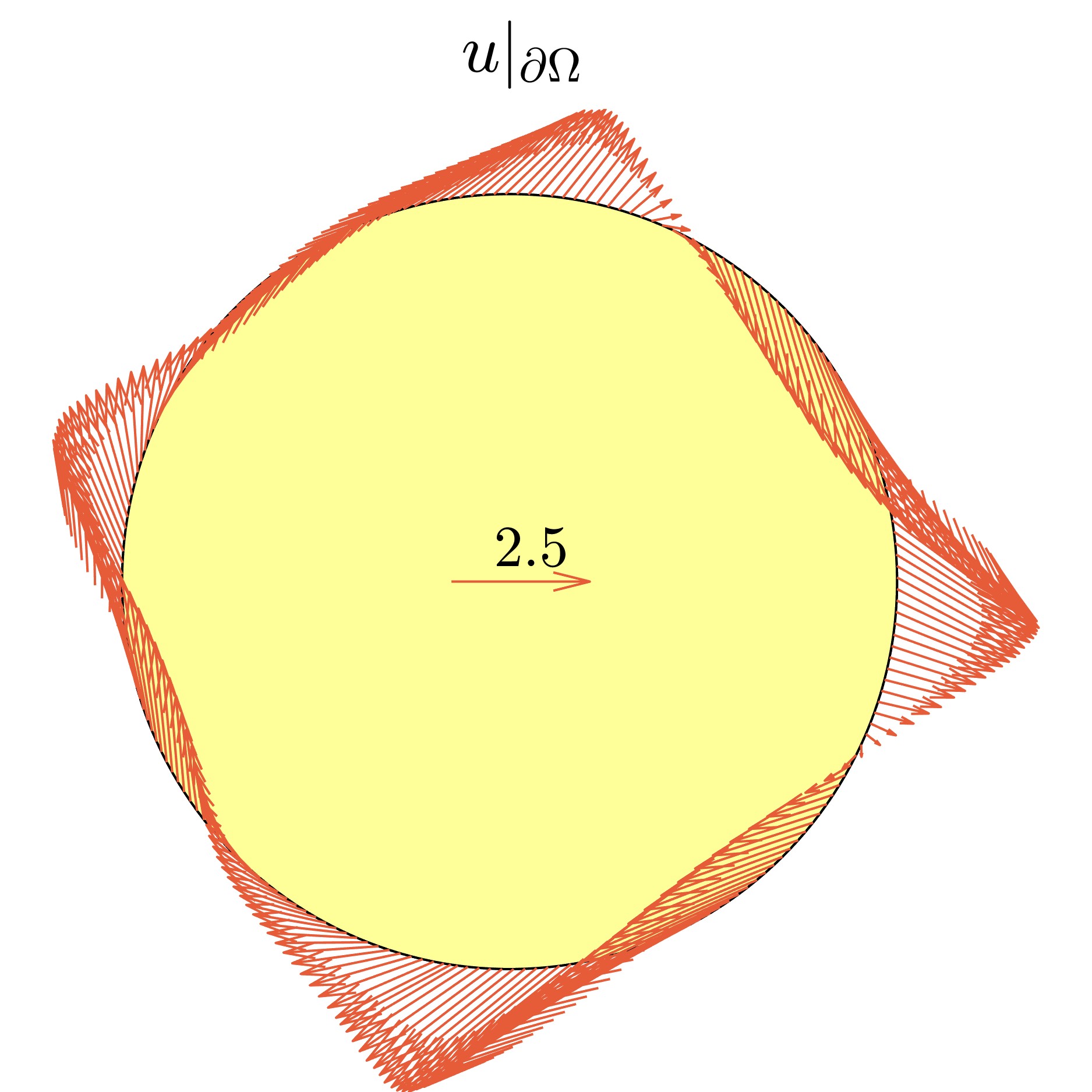}
        \put(-385,120){(a)}
        \put(-120,120){(b)}\\
        \includegraphics[width=0.24\linewidth]{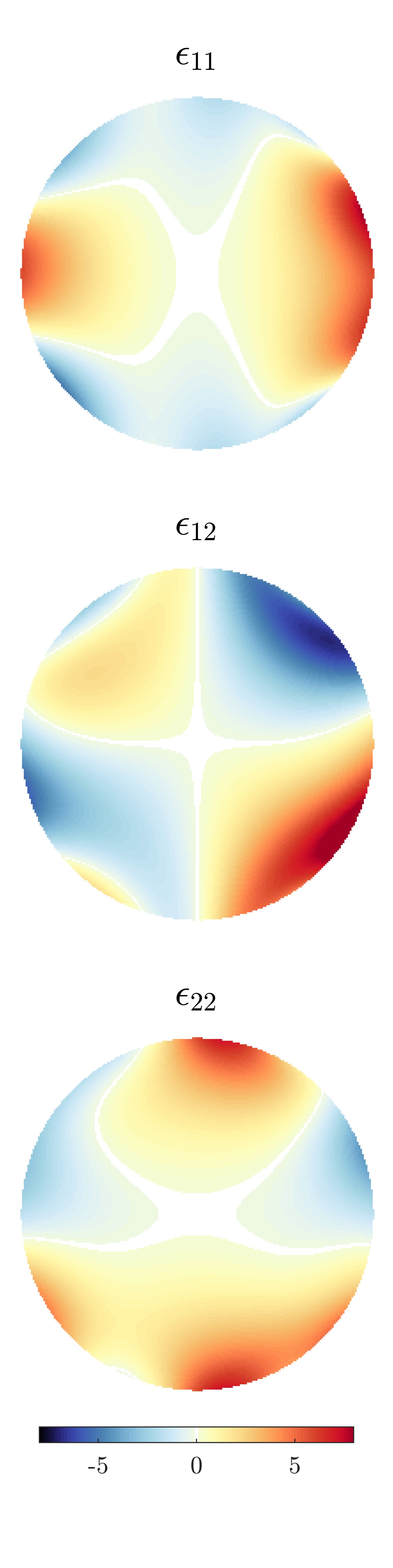}
        \includegraphics[width=0.24\linewidth]{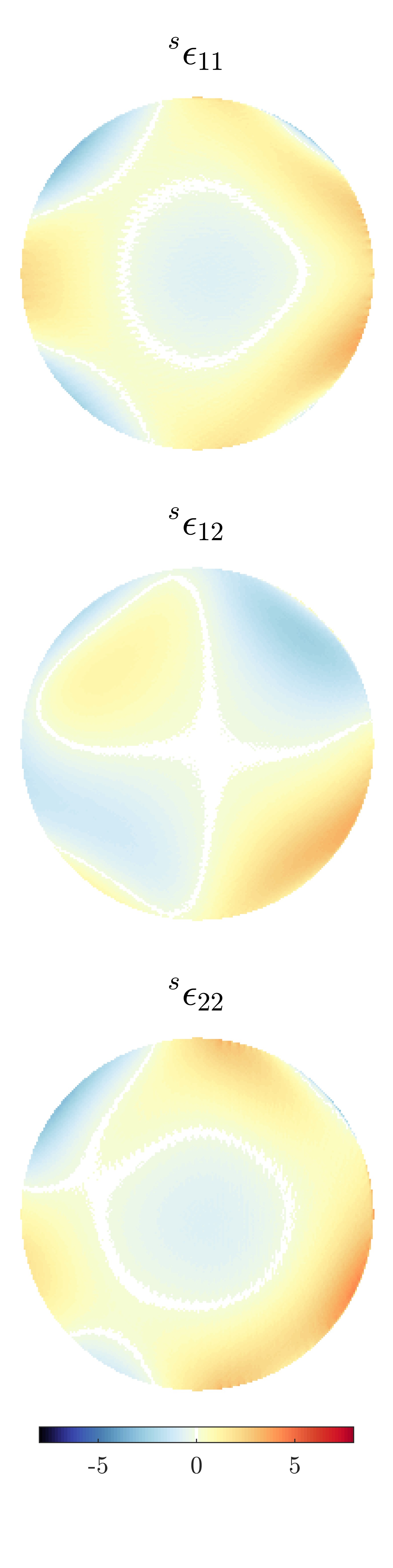}
        \includegraphics[width=0.24\linewidth]{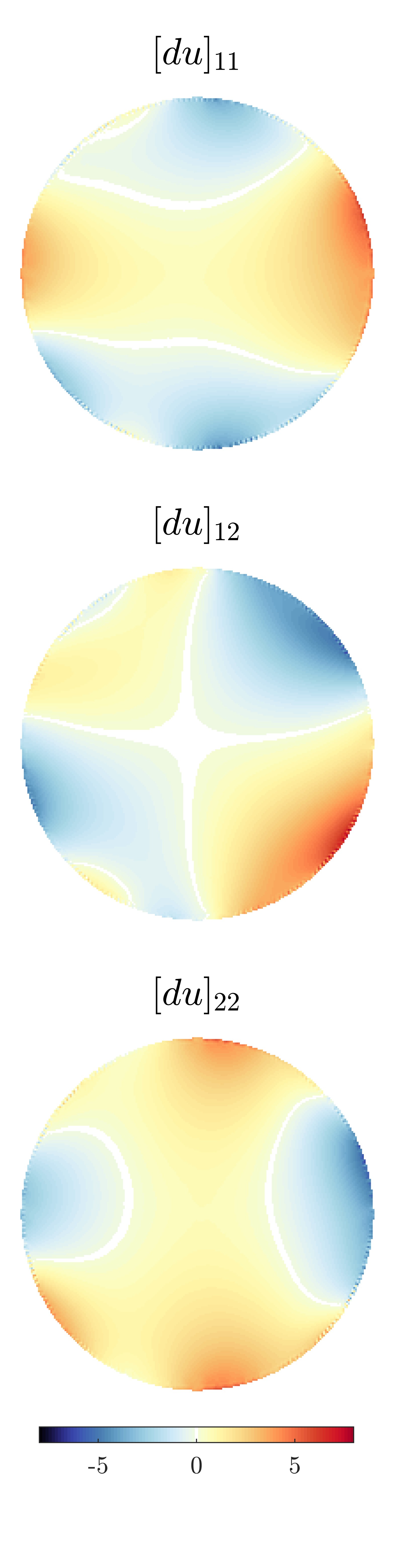}
        \includegraphics[width=0.24\linewidth]{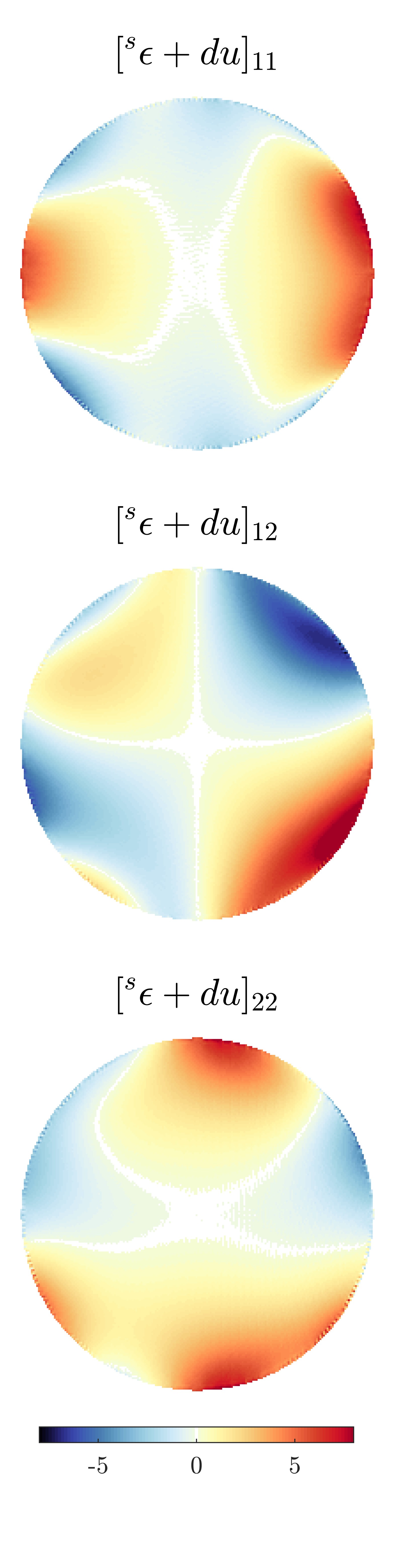}
        \put(-390,350){(c)}
        \put(-292.5,350){(d)}
        \put(-193.5,350){(e)}
        \put(-95,350){(f)}
        \caption{\label{SimResult}Simulated reconstruction; a) the original and residual LRT-sinograms, b) the boundary displacement computed from the residual LRT-sinogram (scale indicated by the central arrow), c) the true strain field, d) the solenoidal part from \eqref{2DInvFormula}, e) the potential part from step \ref{PotSol} above, and finally f) the reconstructed strain field.  }
    \end{center}
\end{figure}

Figure \ref{SimResult}c to \ref{SimResult}f shows the strain field along with its reconstruction laid out in its component parts.  The potential part (Figure \ref{SimResult}e), was computed from the boundary displacement shown in Figure \ref{SimResult}b which, in turn, was computed from the residual LRT shown in Figure \ref{SimResult}a.  The reconstruction of $du$ was carried out using a finite element model consisting of 8153 elements (typical element size of 0.03 units).

\subsection{Convergence and noise rejection}

Sensitivity to experimental error was examined by introducing zero-mean random noise to the simulated measurement above.  This noise had a standard deviation of 5\% of the average absolute value of the simulated strain-sinogram.  

Convergence of the reconstruction to the original strain field was examined as a function of the number of projections with geometry otherwise as above (i.e. same computational grid and projection spacing).  In each case, the relative error in the reconstruction was computed as an integral over the whole domain of the form
\[
    e = \frac{\int_{\mathbb{D}^2} ||^s\epsilon+du-\epsilon||_F dA}{\int_{\mathbb{D}^2} ||\epsilon||_F dA}
\]
where $||\cdot||_F$ refers to the Frobenius norm, and integrals were computed numerically as a Riemann sum of pixel values.

The resulting relative error was both a function of the number of projections and the size of the finite element mesh used to compute $du$; larger meshes require many more projections, but eventually converge to more accurate reconstructions.

\begin{figure}
    \begin{center}
        \includegraphics[width=0.8\linewidth]{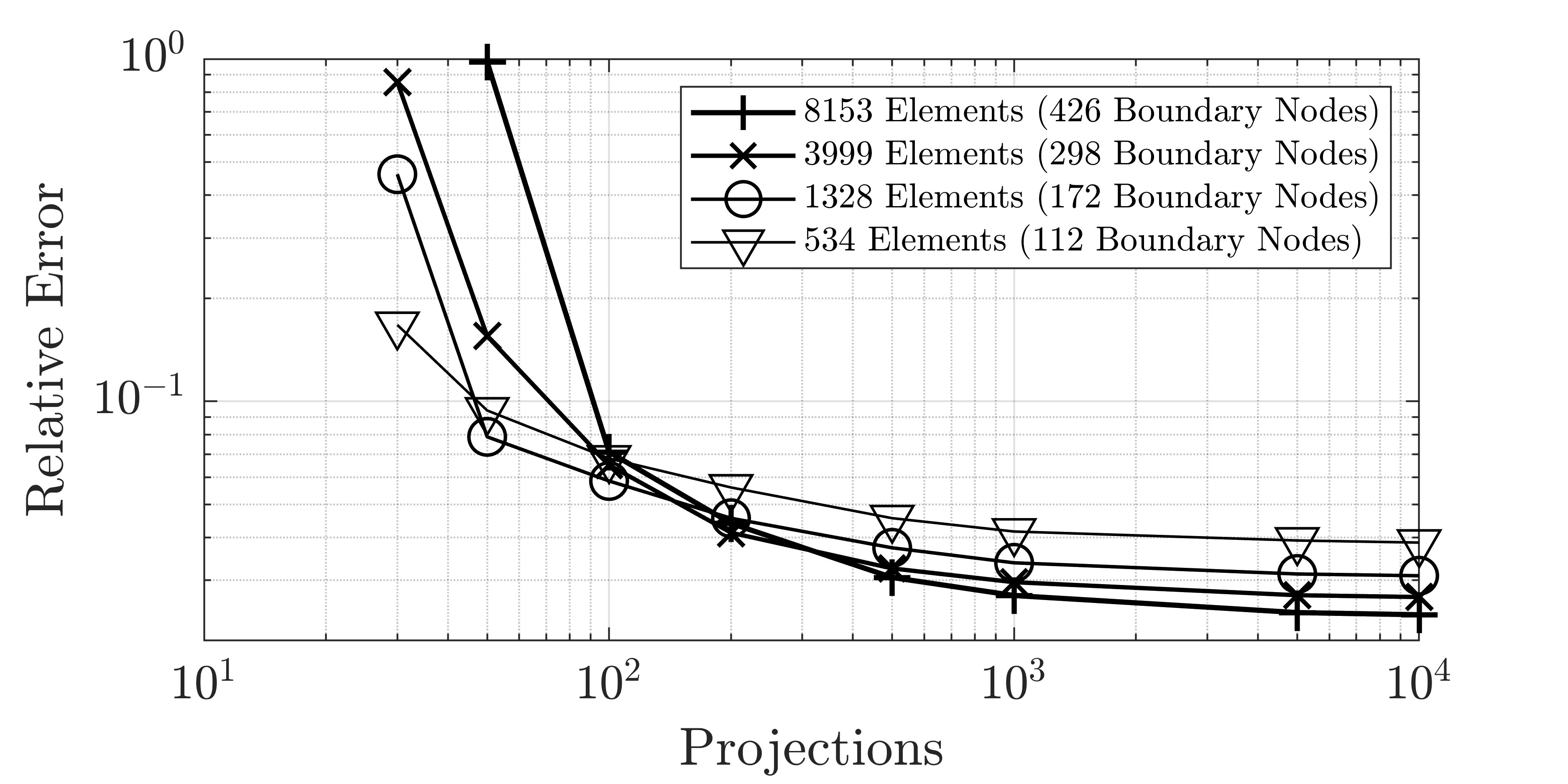}
        \caption{\label{Convergence} Convergence of reconstructions with the number of projections in the presence of random noise. Each line is for a different size finite element mesh with the corresponding number of elements and boundary nodes indicated in the legend.}
    \end{center}
\end{figure}

\section{Conclusion}

We have developed an algorithm capable of reconstructing elastic strain fields satisfying equilibrium from their LRT on connected objects whose boundary only contains one component. Our method provides full reconstruction in the energy resolved neutron transmission strain imaging problem for objects which satisfy this condition, without restrictions imposed in previous work such as no residual stress or zero boundary traction. In the case of a connected object with more than one boundary component, there will be a finite dimensional space of undetectable tensor fields which satisfy equilibrium inside the object and restrict to different rigid body motions on each component of the boundary. We will study this case in future work.

A critical step in our reconstruction is determination of the values of a potential field $d u$ on the boundary of an object from the LRT of $\chi d u$, where $\chi$ is the indicator function for the object. We showed that this determination is unique up to rigid motions on each component of the boundary, which proves our method for recovery of the elastic field gives the correct result when there is only one component of the boundary. However, the stability of this recovery is still an open problem. This stability will depend on the object, and we conjecture that the dependence will be through the curvature of the boundary. This is a topic for future work.

Figure \ref{Convergence} also provides an avenue for further investigation. Practical neutron transmission measurements provide on the order of 50-100 projections \cite{gregg2018tomographic}, and for this number Figure \ref{Convergence} shows that a relatively coarse finite element mesh may be preferable. This is consistent with the``regularization by discretization"  philosophy in inverse problems \cite{kirsch2011introduction} and work is necessary to determine the ideal mesh given a certain number of projections.

\section{Acknowledgements}

This work is supported through a number of funding sources;

Contributions by C Wensrich and T Doubikin were supported by the Australian Research Council through a Discovery Project Grant (DP170102324).  

Contributions from W Lionheart and S Holman were supported by the Engineering and Physical Sciences Research Council (EPSRC) through grant EP/V007742/1.

Contributions from A Polyakova were supported by the framework of the government assignment of the Sobolev Institute of Mathematics, project FWNF-2022-0009.

Contributions from I Svetov were supported by the Russian Scientific Foundation
(RSF), project No. 24-21-00200.

Contributions from M Courdurier were partially supported by ANID Millennium Science Initiative Program through Millennium Nucleus for Applied Control and Inverse Problems NCN19-161.

Experimental data shown in Figure \ref{fig:BE_Imaging} was measured using the RADEN instrument in the Materials and Life Sciences Institute at the Japan Proton Accelerator Research Complex (J-PARC) through Long Term Proposal 2017L0101.

The authors would also like to thank the Isaac Newton Institute for Mathematical Sciences  for support and hospitality through the program Rich and Non-linear Tomography: A Multidisciplinary Approach where work on this paper was undertaken. This program was supported by EPSRC grant number EP/R014604/1.  

While in Cambridge, all authors received support from the Simons Foundation. C Wensrich would also like to thank Clare Hall for their support and hospitality over this period.

\bibliographystyle{elsarticle-num}
\bibliography{WCbiblio}

\begin{thebibliography}{10}
\expandafter\ifx\csname url\endcsname\relax
  \def\url#1{\texttt{#1}}\fi
\expandafter\ifx\csname urlprefix\endcsname\relax\def\urlprefix{URL }\fi
\expandafter\ifx\csname href\endcsname\relax
  \def\href#1#2{#2} \def\path#1{#1}\fi

\bibitem{kiyanagi2012new}
Y.~Kiyanagi, H.~Sato, T.~Kamiyama, T.~Shinohara, A new imaging method using
  pulsed neutron sources for visualizing structural and dynamical information,
  in: Journal of Physics: Conference Series, Vol. 340, IOP Publishing, 2012, p.
  012010.

\bibitem{kockelmann2007energy}
W.~Kockelmann, G.~Frei, E.~H. Lehmann, P.~Vontobel, J.~R. Santisteban,
  Energy-selective neutron transmission imaging at a pulsed source, Nuclear
  Instruments and Methods in Physics Research Section A: Accelerators,
  Spectrometers, Detectors and Associated Equipment 578~(2) (2007) 421--434.

\bibitem{shinohara2020energy}
T.~Shinohara, T.~Kai, K.~Oikawa, T.~Nakatani, M.~Segawa, K.~Hiroi, Y.~Su,
  M.~Ooi, M.~Harada, H.~Iikura, et~al., The energy-resolved neutron imaging
  system, raden, Review of Scientific Instruments 91~(4) (2020).

\bibitem{abbey2012neutron}
B.~Abbey, S.~Y. Zhang, M.~Xie, X.~Song, A.~M. Korsunsky, Neutron strain
  tomography using {B}ragg-edge transmission, International journal of
  materials research 103~(2) (2012) 234--241.

\bibitem{gregg2017tomographic}
A.~Gregg, J.~Hendriks, C.~Wensrich, M.~Meylan, Tomographic reconstruction of
  residual strain in axisymmetric systems from {B}ragg-edge neutron imaging,
  Mechanics Research Communications 85 (2017) 96--103.

\bibitem{gregg2018tomographic}
A.~Gregg, J.~Hendriks, C.~Wensrich, A.~Wills, A.~Tremsin, V.~Luzin,
  T.~Shinohara, O.~Kirstein, M.~Meylan, E.~Kisi, Tomographic reconstruction of
  two-dimensional residual strain fields from {B}ragg-edge neutron imaging,
  Physical Review Applied 10~(6) (2018) 064034.

\bibitem{hendriks2019robust}
J.~Hendriks, C.~Wensrich, A.~Wills, V.~Luzin, A.~Gregg, Robust inference of
  two-dimensional strain fields from diffraction-based measurements, Nuclear
  Instruments and Methods in Physics Research Section B: Beam Interactions with
  Materials and Atoms 444 (2019) 80--90.

\bibitem{hendriks2017bragg}
J.~N. Hendriks, A.~W. Gregg, C.~M. Wensrich, A.~S. Tremsin, T.~Shinohara,
  M.~Meylan, E.~H. Kisi, V.~Luzin, O.~Kirsten, {B}ragg-edge elastic strain
  tomography for \textit{in situ} systems from energy-resolved neutron
  transmission imaging, Physical Review Materials 1~(5) (2017) 053802.

\bibitem{santisteban2001time}
J.~Santisteban, L.~Edwards, A.~Steuwer, P.~Withers, Time-of-flight neutron
  transmission diffraction, Journal of Applied Crystallography 34~(3) (2001)
  289--297.

\bibitem{wensrich2024direct}
C.~Wensrich, S.~Holman, M.~Courdurier, W.~R. Lionheart, A.~Polyakova,
  I.~Svetov, Direct inversion of the {L}ongitudinal {R}ay {T}ransform for 2{D}
  residual elastic strain fields, Inverse Problems 40 (2024) 075011.

\bibitem{sharafutdinov2012integral}
V.~Sharafutdinov, Integral geometry of tensor fields, Walter de Gruyter, 2012.

\bibitem{CRMATH_2006}
C.~Amrouche, P.~G. Ciarlet, L.~Gratie, S.~Kesavan, On {Saint} {Venant's}
  compatibility conditions and {Poincar\'e's} lemma, Comptes Rendus.
  Math\'ematique 342~(11) (2006) 887--891.
\newblock \href {https://doi.org/10.1016/j.crma.2006.03.026}
  {\path{doi:10.1016/j.crma.2006.03.026}}.

\bibitem{derevtsov2015tomography}
E.~Y. Derevtsov, I.~E. Svetov, Tomography of tensor fields in the plain,
  Eurasian Journal of Mathematical and Computer Applications 3~(2) (2015)
  24--68.

\bibitem{louis2022inversion}
A.~K. Louis, Inversion formulae for ray transforms in vector and tensor
  tomography, Inverse Problems 38~(6) (2022) 065008.

\bibitem{wensrich2016bragg}
C.~M. Wensrich, J.~N. Hendriks, A.~Gregg, M.~H. Meylan, V.~Luzin, A.~S.
  Tremsin, Bragg-edge neutron transmission strain tomography for in situ
  loadings, Nuclear Instruments and Methods in Physics Research Section B: Beam
  Interactions with Materials and Atoms 383 (2016) 52--58.

\bibitem{ciarlet2018vector}
P.~G. Ciarlet, M.~Malin, C.~Mardare, On a vector version of a fundamental lemma
  of {J.L.} {L}ions, Chinese Annals of Mathematics, Series B 39 (2018) 33--46.

\bibitem{kirsch2011introduction}
A.~Kirsch, An introduction to the mathematical theory of inverse problems, Vol.
  120, Springer, 2011.

\end{thebibliography}

\end{document}